\newcommand{\ra}{\rightarrow}
\newcommand{\pr}{\prime}
\newcommand{\de}{\partial}
\newcommand{\R}{\mathbb{R}}
\DeclareMathOperator{\id}{id}
\DeclareMathAlphabet{\mathcal}{OMS}{cmsy}{m}{n}
\newcommand{\indep}{\rotatebox[origin=c]{90}{$\models$}}
\newcommand{\hookuparrow}{\mathrel{\rotatebox[origin=c]{90}{$\hookrightarrow$}}}
\newtheorem{thm}{Theorem}
\newtheorem*{thm*}{Relative Version}
\theoremstyle{definition}
\newtheorem*{defin*}{Definition}
\newtheorem*{observation}{Observation}
\theoremstyle{plain}
\newtheorem{proposition}{Proposition}
\newtheorem{remark}{Remark}
\begin{document}

\title{Approximation rigidity and $h$-principle for Bing Spines}

\author{Michael Freedman}
\address{\hskip-\parindent
	Michael Freedman \\
    Microsoft Research, Station Q, and Department of Mathematics \\
    University of California, Santa Barbara \\
    Santa Barbara, CA 93106 \\
}
\author{T. T$\hat{\mathrm{a}}$m Nguy$\tilde{\hat{\mathrm{e}}}$n-Phan}
\address{\hskip-\parindent
	Tam Nguyen-Phan \\
	Max Planck Institute for Mathematics \\
	Vivatgasse 7 \\
	Bonn, 53111 \\
	Germany \\
}

\begin{abstract}
	We show that all PL manifolds of dimension $\geq 3$ have spines similar to Bing's house with two rooms. Beyond this we explore approximation rigidity and an $h$-principle.
\end{abstract}

\maketitle

\section{Introduction}

Let $M^n$ be a PL manifold of dimension $n$ with nonempty boundary $\partial M$. We say that a subset $S \subset M$ of dimension $<n$ is a \emph{spine} if there is a PL compatible triangulation $\Delta$ of $M$ and a sequence of elementary collapses $c_1, \dots, c_k$, the composition $c$ of which is a collapse from $M$ to the subcomplex $S$. Then $M$ has the structure of a mapping cylinder given by $c \big\vert_{\de M}: \de M \ra S$. Usually the map $c \big\vert_{\de M}$ is not immersive (e.g. when $M$ is a closed ball and $S$ is a point); for this reason we would like introduce the notion of a \emph{Bing spine}.

\subsection*{Bing spine} A spine $S$ is called a \emph{Bing spine} if the compound collapse $c$ restricts to a locally flat PL immersion $c \big\vert_{\de M}: \de M \ra S$. The basic example of a Bing spine is Bing's house with two rooms, which is a Bing spine for the 3-ball $B^3$.

\begin{figure}[!ht]
	\centering
	\begin{tikzpicture}
		\draw (0,0) -- (0,4);
		\draw (4,0) -- (4,4);
		\draw (2,4) ellipse (2 and 0.25);
		\draw (2,0) ellipse (2 and 0.25);
		\draw (2,2) ellipse (2 and 0.25);
		\draw (1,2) ellipse (0.2 and 0.1);
		\draw (1,4) ellipse (0.2 and 0.1);
		\draw (1.2, 4) -- (1.2, 2);
		\draw (0.8, 4) -- (0.8, 2);
		\draw (3,0) ellipse (0.2 and 0.1);
		\draw (3,2) ellipse (0.2 and 0.1);
		\draw (3.2, 2) -- (3.2, 0);
		\draw (2.8, 2) -- (2.8, 0);
		\path [pattern = horizontal lines, pattern color = gray] (3.2,2.1) rectangle (4,0);
		\path [pattern = horizontal lines, pattern color = gray] (0.8,2) rectangle (0,4.05);
	\end{tikzpicture}
	\caption{The classical Bing house.}
	\label{2d-bing}
\end{figure}
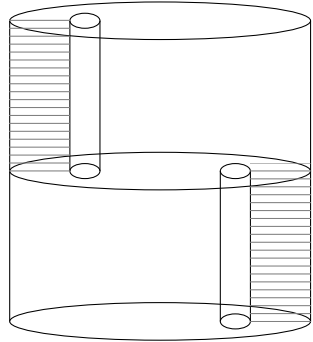

One way to arrive at the above picture is to take a solid 3-ball $M = B^3$ and press it in with two hands, one from the top and the other from the bottom, and then make each hand grab the wrist of the other before ``inflating" both hands until the deformed ball becomes the 2-complex $S$ in Figure \ref{2d-bing}. It should be clear from this process that $S$ is a Bing spine of $M$. 

Another example of a Bing spine is the last picture in Figure \ref{basic-model}, which illustrates how the process described above is carried out with six hands to give a Bing spine of the solid torus.   

The main result of this paper is that Bing spines (BS) are rather common. We will use the Hausdorff topology on subsets of $M$ to measure the ``distance" between various spines.

\begin{thm}[Approximation Theorem]\label{approximation}
	$n \geq 3$, all compact PL $n$-manifolds $M$ with $\de M \neq \varnothing$ admit a Bing spine. Furthermore, Bing spines are dense in the Hausdorff topology on the space of spines.
\end{thm}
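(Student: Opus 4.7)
My plan is to prove the existence and density statements together via a local-to-global construction in which the classical Bing house and its six-hand torus analogue (Figure \ref{basic-model}) serve as universal local models. The key observation is that being a Bing spine is a \emph{local} property of the compound collapse $c$: the immersion condition on $c\big\vert_{\de M}$ can be verified sheet-by-sheet near each point. Hence the problem reduces to (i) producing Bing spines for each piece of a handle (or simplex) decomposition of $M$, and (ii) gluing the resulting partial immersions along attaching regions without destroying local flatness.

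First I would assemble a catalogue of local Bing models, one for each handle $h \cong D^k \times D^{n-k}$ equipped with a prescribed Bing immersion pattern on its attaching tube $S^{k-1} \times D^{n-k}$. The case $(k,n) = (0,3)$ is the classical Bing house, and $(k,n) = (1,3)$ is the six-hand solid torus. Higher-dimensional models would be produced by taking products with $D^{n-3}$ (after a PL corner-smoothing) and by adapting the ``hands grabbing wrists'' move to higher codimension for $k \geq 1$. Given this catalogue, existence follows by induction: start with a relative handle decomposition $M = (\de M \times I) \cup h_1 \cup \cdots \cup h_N$, whose initial stage has the trivially-Bing spine $\de M \times \{1\}$, and attach each $h_i$ together with its local model, interlocking the ``hands'' of the new gadget with the existing immersion after a small PL isotopy of the attaching map. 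For density, given any spine $S_0$ and any $\epsilon > 0$, refine the handle decomposition to mesh less than $\epsilon$ and keep the above modifications supported in $\epsilon$-small neighborhoods; the Hausdorff distance to $S_0$ is then bounded by $\epsilon$ automatically.

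I expect the principal technical difficulty to be the gluing step, specifically the preservation of local flatness of the compound immersion $c\big\vert_{\de M}$ where Bing gadgets meet. In dimensions $n \geq 4$ local flatness of a PL immersion is not automatic along double and higher multiple-point strata, so the interlocking of fingers between adjacent gadgets must be placed in PL general position, and the normal structure near each multiple-point stratum must be identified with a standard model. I anticipate this will require an inductive shelling argument on handles of increasing index, combined with an ambient PL transversality theorem applied inside $\de M$; this is the step where the hypothesis $n \geq 3$ is genuinely used, since codimension-one obstructions in $\de M$ must be navigable.
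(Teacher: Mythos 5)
The central gap is in your density argument. The paper works directly from a collapse sequence $c_1, \dots, c_k$ of the \emph{given} spine $S_0$: it runs through that sequence, identifies the ``dangerous'' collapses of $n$-simplices with a free face (where the map from $\de M$ would stop being immersive), and covers each one with a refined sub-sequence that terminates in a small Bing ruff --- a Bing spine of $S^{n-2}\times D^2$ inserted near the boundary of the free face inside the simplex being collapsed. The output is then literally $S_0$ together with $\epsilon$-small ruffs attached along the danger loci, so the Hausdorff estimate is automatic \emph{because the construction follows the collapse sequence of $S_0$}. Your construction instead produces a Bing spine determined by a chosen handle decomposition of $M$, built up from $\de M\times\{1\}$. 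Refining the mesh makes that spine trace out a finer skeleton, but nothing ties its limit to an arbitrary $S_0$; ``the Hausdorff distance to $S_0$ is then bounded by $\epsilon$ automatically'' does not follow. To fix this you would need to engineer the handle decomposition so that the handle cores accumulate on $S_0$ --- which is essentially recovering a collapse sequence for $S_0$, i.e.\ the paper's starting point.

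The existence half is also underspecified at exactly the step you flag as the ``principal technical difficulty'': interlocking the fingers of a newly-attached handle's Bing gadget with the existing immersion while preserving local flatness along multiple-point strata. The paper's mechanism is designed so that this confrontation never occurs. There is one local model per dimension, $BS_{n,\epsilon}\subset S^{n-2}\times D^2$, constructed by iterated suspension of the $n=3$ solid-torus model with the suspension cone points resolved by gluing in Bing spines of $B^{n-1}$ (from the companion paper); each ruff is inserted inside a single dangerous $n$-simplex, and, as the paper notes, once a ruff is created it never participates in forming a later ruff, so no two gadgets ever need to be matched along an interface and the $\epsilon$-bookkeeping requires no inductive hierarchy. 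Your handle-based catalogue, with a different model for each index $k$ and a general-position/shelling argument to glue them, is a genuinely different and substantially harder route, and as written it leaves both the gluing step and the density estimate unconstructed.
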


\begin{thm*}
	Let $M^n$, $n>3$, be a compact PL manifold with $A \subset \de M$ a compact codimension 0 submanifold of $\de M$. Suppose $BS^{n-2}$ in a Bing spine for $A$, then any spine $S$ for $M$ which meets $A$ in $BS^{n-2}$ can be approximated by a relative Bing spine $BS^{n-1}$; $A \cap BS^{n-1} = BS^{n-2}$ and $BS^{n-1}$ is parameterized by a locally flat PL immersion $\overline{\de M \setminus A} \looparrowright BS^{n-1}$, extending the locally flat PL immersion $\de A \looparrowright BS^{n-2}$ parameterizing $BS^{n-2}$. Again approximation is in the Hausdorff topology.
\end{thm*}

A codimension one PL immersion $M^{n-1} \looparrowright X^n$ of PL manifolds is locally flat if near any point $p$ in the source the local embedding near $p$ extends to a bi-collar of a neighborhood $\mathcal{N}_p$ of p:

\begin{figure}[ht]
	\centering
	\begin{tikzpicture}
		\node at (0,0) {$\mathcal{N}_p \hookrightarrow \mathcal{N}(p) \times [-1,1]$};
		\node at (-1.6,0.6) {$\hookuparrow$};
		\node at (1.4,0.6) {$\hookuparrow$};
		\node at (-1.6,1.15) {$M^{n-1}$};
		\node at (1.4,1.15) {$X^n$};
		\draw[->] (-0.85,0.95) to[out=90,in=0] (-1,1.35) to[out=180,in=90] (-1.1,1.25) to[out=-90,in=180] (-0.85,1.15) -- (1,1.15);
		\node at (0,-0.5) {$q \mapsto q \times 0$};
		\node at (4.5,0.5) {all maps in PL category};
		\node at (-5,0) {$\hspace{1em}$};
	\end{tikzpicture}
\end{figure}

\noindent
\textbf{An $h$-principle} would say that for suitable topologies the space S of spines deformation retracts to the space BS of Bing spines. We prove a concordance based, semi-simplicial, $h$-principle of the form below, stated in detail in Section \ref{weak h-principle}.

\begin{thm}[rough statement]\label{roughthm}
	Assume $(M^n, \de M)$, $n \geq 3$ and $\de M \neq \varnothing$, then in the concordance-based, semi-simplicial setting, for any $k$-parameter family $\mathcal{F}$ of PL collapses $c_f: M \ra S_f$, $f \in \mathcal{F}$, and an $\epsilon>0$, there is an $\epsilon$-approximation $c_f^\pr:M \ra BS_f$ so that $BS_f$ are Bing spines with $d^{\text{Hausdorff}}(S_f,BS_f) < \epsilon$ and $d^{\text{sup norm}}(c_f, c_f^\pr) < \epsilon$, $f \in \mathcal{F}$.
\end{thm}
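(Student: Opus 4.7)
The plan is to leverage Theorem~\ref{approximation} (and its Relative Version) via an induction on the dimension $k$ of the parameter simplex. Because the statement is concordance-based and semi-simplicial, at the inductive step one is free to replace ``a path of collapses between $c_f$ and $c_f'$'' by a single spine of a thickened manifold with prescribed boundary data, thereby reducing a parametric problem to the relative, $0$-parametric one.

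\textbf{Base case.} For $k=0$ a family is a single collapse $c:M \to S$; Theorem~\ref{approximation} produces a Bing spine $BS$ with $d^{\text{Hausdorff}}(S,BS) < \epsilon$. Taking the initial triangulation fine enough, the replacement map $c':M \to BS$ agrees with $c$ outside an $\epsilon$-neighborhood of the loci of the Bing modifications, so $d^{\text{sup norm}}(c,c') < \epsilon$ as well.

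\textbf{Inductive step.} Assume the statement for families of parameter dimension $<k$. Given a $k$-simplex of collapses $\{c_f\}_{f \in \Delta^k}$, the semi-simplicial inductive hypothesis, applied face-by-face on $\partial \Delta^k$ with compatibility, furnishes Bing-spine approximations $c_f'$ and concordances $H_f$ from $c_f$ to $c_f'$ for all $f \in \partial \Delta^k$. Let $W := M \times \Delta^k \times [0,1]$, an $(n+k+1)$-manifold, and consider the codimension-$0$ submanifold
\[
A := \bigl(M \times \Delta^k \times \{0\}\bigr) \cup \bigl(M \times \partial \Delta^k \times [0,1]\bigr) \subset \partial W.
\]
The original family $\{c_f\}$ and the inductive boundary data glue to a spine structure on $A$, already Bing on the second piece. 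Applying the Relative Version of Theorem~\ref{approximation}, whose dimension hypothesis is met since $n+k+1 > 3$, extends this to a relative Bing spine of $W$; restricting to the remaining face $M \times \Delta^k \times \{1\}$ delivers the required $BS_f$, while restricting to $M \times \{f\} \times [0,1]$ supplies the concordance between $c_f$ and $c_f'$.

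\textbf{Main obstacle.} The crux is that the Relative Version as formulated produces a Bing spine of the ambient $(n+k+1)$-manifold, whereas the $h$-principle demands that this spine project fiberwise over $\Delta^k \times [0,1]$ to a family of Bing spines of $M$. Ensuring such fiberwise compatibility requires carrying out the local ``grabbing/inflating'' moves from the proof of Theorem~\ref{approximation} uniformly in the parameter $f$, so that the resulting subcomplex meets each fiber $M \times \{(f,t)\}$ in a (relative) Bing spine of $M$. In other words, one must upgrade the core construction to a parameter-sensitive form, either by performing it fiberwise from the start or by arranging transversality of the total spine to the projection $W \to \Delta^k \times [0,1]$. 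Once this is in place, the sup-norm estimate on $c_f'$ follows from the Hausdorff estimate on $BS_f$ by the same local-support argument used in the base case, and the $\epsilon$ can be absorbed by choosing the approximation parameter of each inductive stage geometrically small.
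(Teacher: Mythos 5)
Your overall skeleton --- induct on the dimension $k$ of the parameter simplex, use the Relative Version of Theorem~\ref{approximation} on $M \times \Delta^k$ (and on a further product with $I$ to build the concordance/homotopy), and stitch via the semi-simplicial face structure --- is essentially what the paper does. The problem lies in your ``Main obstacle'' paragraph, which misreads what the theorem actually asserts.

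The theorem is explicitly stated ``in the concordance-based, semi-simplicial setting.'' In the paper's formalization, a $k$-simplex of $\mathcal{S}(M)$ is simply \emph{a spine of $M \times \Delta^k$} restricting compatibly to the faces; there is \emph{no} requirement that this spine meet each fiber $M \times \{p\}$ in a spine of $M$. That fiberwise requirement defines the different semi-simplicial space $\mathcal{S}_S(M)$ (``sliced'' concordance), and the paper explicitly singles out the $h$-principle for $\mathcal{B}_S \subset \mathcal{S}_S$ as unproved and ``of great interest.'' Consequently, the difficulty you flag --- that the Relative Version produces a Bing spine of the ambient $(n+k+1)$-manifold without fiberwise compatibility over $\Delta^k \times [0,1]$ --- is not an obstacle to the stated theorem at all; it is the entire content of a stronger, open statement. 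Since you never resolve this obstacle but instead write ``once this is in place...,'' your proposal, read literally, proves nothing for the stronger statement and needlessly withholds the conclusion for the correct, weaker one. If you delete the fiberwise demand and interpret a ``$k$-parameter family'' as a spine of $M \times \Delta^k$ (as the concordance-based setting dictates), then your induction with the Relative Version closes cleanly, and your argument becomes the paper's: apply the Relative Version of Theorem~\ref{approximation} skeleton by skeleton to define $\pi_\epsilon$, and apply it once more on $\sigma \times I$ to define the homotopy $\Pi_\epsilon$.

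One smaller point: in your base case you assert sup-norm control on the replacement map as an immediate consequence of Hausdorff control on the spine plus locality of the Bing modifications. This is correct in spirit, but you should cite the place in the proof of Theorem~\ref{approximation} where it is arranged --- namely, that the collapses to the Bing ruffs in $S^{n-2} \times D^2$ are taken to be $O(\epsilon)$-maps (proportional to the $D^2$ radius) and that the intermediate ``partial collapses'' of lower-dimensional cells can be kept uniformly close to the original collapses; the sup-norm estimate is built into the controlled construction, not deduced a posteriori from Hausdorff proximity of the images.
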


This theorem is for example similar to the main theorem of immersion theory \cite{hirsch} that any finite dimensional family of maps (of codimension $\geq 1$) can be perturbed to a family of immersions iff the family is covered by a family of tangent bundle injections. In our case, we work with manifold boundaries whose normal collar supplies the tangential information so our statement is even simpler.

Actually both theorems depend on $h$-principle thinking. Just as the Ur $h$-theorem, the Whitney-Gorestein Theorem \cite{whitney} introduces a basic local model, ``wiggle-wiggle,'' to ``soften" immersions of a circle into the plane. (Before lifting a homotopy between immersions of equal tangential degrees to a \emph{regular} homotopy, a series of wiggles is introduced to ``soften'' the immersion.):

\begin{figure}[!ht]
	\begin{tikzpicture}[scale=0.7]
		\draw (-0.2,-1) [out=80,in=190] to (2.2,4.2);
		\draw [->] (3,1.5) -- (5.5,1.5);
		\draw (7.5,-1.1) to [out=80,in=180] (8.7,0.7) to [out=0,in=90] (9.2,0.2) to [out=-90,in=0] (8.6,-0.2) to [out=180,in=0] (6.9,1.3) to [out=180, in=90] (6.4,0.9) to [out=-90, in=180] (7.1,0.4);
		\draw (7.1,0.4) to [out=0, in=180] (9.3,2.1) to [out=0,in=90] (9.8,1.6) to [out=-90,in=0] (9.2,1.2) to [out=180,in=0] (7.5,2.7) to [out=180, in=90] (7,2.3) to [out=-90, in=180] (7.7,1.8);
		\draw (7.7,1.8) to [out=0, in=180] (9.9,3.5) to [out=0,in=90] (10.4,3) to [out=-90,in=0] (9.8,2.6) to [out=180,in=0] (8.1,4.1) to [out=180, in=90] (7.6,3.7) to [out=-90, in=180] (8.3,3.2);
		\draw (8.3,3.2) to [out=0, in=180] (9.3,4.6);
	\end{tikzpicture}
	\centering
	\caption{}
\end{figure}

To prove both Theorems \ref{approximation} and \ref{roughthm} we introduce a basic local model $BS_{n,\epsilon}$ (one for each dimension $n \geq 3$ and each $\epsilon>0$) to make PL immersions sufficiently flexible that the immersive property survives each elementary collapse $c_i$. The model is a family of Bing spines $BS_{n,\epsilon}$ for $S^{n-2} \times D^2$ which admit a PL immersion $c_{n,\epsilon}: S^{n-2} \times S^1 \ra BS_{n,\epsilon}$ within $\epsilon$ of the projection $S^{n-2} \times S^1 \ra S^{n-2}$, $\epsilon > 0$, $n \geq 3$. The composite collapse $c_{n,\epsilon}$ (as is always the case with a composition of elementary collapses) defines a mapping cylinder structure (MC) on the total space, $S^{n-2} \times D^2$, with source $S^{n-2} \times S^1$ and target $BS_{n,\epsilon}$. These models will be inserted around the boundary of a free face to modify the current immersion before that face is collapsed. We call these local models ``Bing ruffs,'' to evoke the neck-ware popular among Elizabethan royalty. This modification allows the then current immersion to survive the next collapse, as we will see in the proof of Theorem \ref{approximation} in the next section.

\begin{figure}[!ht]
	\centering
	\begin{tikzpicture}
		\node at (0,4) {$\cong$};
\draw (2,4) circle (1);
\node at (2.1,4.1) {$B^{n-1}$};
\node at (3.4,4.8) {$S^{n-2}$};
\draw (-2,3.1) -- (-3,5.1) -- (-4,3.1) -- cycle;
\draw [color=gray] (-1.4,3.4) -- (-2,3.1) -- (-1.8,2.5);
\draw [color=gray] (-2,3.1) -- (-1.4,2.9);
\draw [color=gray] (-4.6,3.4) -- (-4,3.1) -- (-4.2,2.5);
\draw [color=gray] (-4,3.1) -- (-4.6,2.9);
\draw [color=gray] (-2.6,5.4) -- (-3,5.1) -- (-3.4,5.4);

\node at (-3,4) {$\Delta_{n-1}$};
\node at (-3,3.5) {free face};
\node at (-1.4,4.9) {$\partial(\Delta_{n-1}) = S^{n-2}$};
\draw [->] (-1.4, 4.6) -- (-2.4,4.1);

\draw (0.5,0.9) .. controls (0.3,0.4) and (0.9,0.3) .. (1.4,-0.2) .. controls (1.6,-0.4) .. (1.73,-0.63);
\draw (0.5,0.9) to [out=70,in=150] (0.65,0.95) to [out=-20,in=80] (0.67,0.8) .. controls (0.55,0.45) and (1.1,0.25) .. (1.2,0.37) .. controls (1.3,0.45) and (1.2,0.55) .. (1.14,0.6);
\draw (1.1,0.85) ellipse (0.15 and 0.17);
\draw (1.23,0.76) .. controls (1.6,0.3) and (1.8,-0.2) .. (1.95,-0.45);
\draw (1.2,0.72) .. controls (1.1,0.5) and (1.0,0.4) .. (0.74,0.5);
\draw (0.95, 0.9) .. controls (0.9,0.8) and (0.8,0.75) .. (0.67,0.8);
\draw (1,1) to [out=135,in=45] (0.85,0.99) to [out=225,in=155] (0.89,0.83);

\draw [rotate around={120:(0,-1)}] (0.5,0.9) .. controls (0.3,0.4) and (0.9,0.3) .. (1.4,-0.2) .. controls (1.6,-0.4) .. (1.73,-0.63);
\draw [rotate around={120:(0,-1)}] (0.5,0.9) to [out=70,in=150] (0.65,0.95) to [out=-20,in=80] (0.67,0.8) .. controls (0.55,0.45) and (1.1,0.25) .. (1.2,0.37) .. controls (1.3,0.45) and (1.2,0.55) .. (1.14,0.6);
\draw [rotate around={120:(0,-1)}] (1.1,0.85) ellipse (0.15 and 0.17);
\draw [rotate around={120:(0,-1)}] (1.23,0.76) .. controls (1.6,0.3) and (1.8,-0.2) .. (1.95,-0.45);
\draw [rotate around={120:(0,-1)}] (1.2,0.72) .. controls (1.1,0.5) and (1.0,0.4) .. (0.74,0.5);
\draw [rotate around={120:(0,-1)}] (0.95, 0.9) .. controls (0.9,0.8) and (0.8,0.75) .. (0.67,0.8);
\draw [rotate around={120:(0,-1)}] (1,1) to [out=135,in=45] (0.85,0.99) to [out=225,in=155] (0.89,0.83);

\draw [rotate around={240:(0,-1)}] (0.5,0.9) .. controls (0.3,0.4) and (0.9,0.3) .. (1.4,-0.2) .. controls (1.6,-0.4) .. (1.73,-0.63);
\draw [rotate around={240:(0,-1)}] (0.5,0.9) to [out=70,in=150] (0.65,0.95) to [out=-20,in=80] (0.67,0.8) .. controls (0.55,0.45) and (1.1,0.25) .. (1.2,0.37) .. controls (1.3,0.45) and (1.2,0.55) .. (1.14,0.6);
\draw [rotate around={240:(0,-1)}] (1.1,0.85) ellipse (0.15 and 0.17);
\draw [rotate around={240:(0,-1)}] (1.23,0.76) .. controls (1.6,0.3) and (1.8,-0.2) .. (1.95,-0.45);
\draw [rotate around={240:(0,-1)}] (1.2,0.72) .. controls (1.1,0.5) and (1.0,0.4) .. (0.74,0.5);
\draw [rotate around={240:(0,-1)}] (0.95, 0.9) .. controls (0.9,0.8) and (0.8,0.75) .. (0.67,0.8);
\draw [rotate around={240:(0,-1)}] (1,1) to [out=135,in=45] (0.85,0.99) to [out=225,in=155] (0.89,0.83);

\draw (-1.3,0.55) .. controls (-1,0.2) and (-0.2,0.8) .. (0.4,0.6);
\draw (-1.3,0.55) to [out=145, in=225] (-1.3,0.7) to [out=45,in=145] (-1.15,0.7);
\draw (-1.15,0.7) .. controls (-0.85,0.5) and (-0.75,0.7) .. (-0.7,0.8) .. controls (-0.67,0.9) and (-0.75,0.95) .. (-0.86,0.93);
\draw (-1,1) ellipse (.15 and .17);
\draw (-0.86,0.93) .. controls (-0.95,0.72) and (-0.95,0.69) .. (-0.99,0.65);
\draw (-1.15, 1) .. controls (-1.22,0.8) .. (-1.24,0.75);
\draw (-0.88,1.1) .. controls (-0.2,1.2) and (0.2,0.9) .. (0.45,0.9);
\draw (-1.13,1.08) to [out=190,in=90] (-1.25,0.98) to [out=-90,in=180] (-1.18,0.9);

\draw [rotate around={120:(0,-1)}] (-1.3,0.55) .. controls (-1,0.2) and (-0.2,0.8) .. (0.4,0.6);
\draw [rotate around={120:(0,-1)}] (-1.3,0.55) to [out=145, in=225] (-1.3,0.7) to [out=45,in=145] (-1.15,0.7);
\draw [rotate around={120:(0,-1)}] (-1.15,0.7) .. controls (-0.85,0.5) and (-0.75,0.7) .. (-0.7,0.8) .. controls (-0.67,0.9) and (-0.75,0.95) .. (-0.86,0.93);
\draw [rotate around={120:(0,-1)}] (-1,1) ellipse (.15 and .17);
\draw [rotate around={120:(0,-1)}] (-0.86,0.93) .. controls (-0.95,0.72) and (-0.95,0.69) .. (-0.99,0.65);
\draw [rotate around={120:(0,-1)}] (-1.15, 1) .. controls (-1.22,0.8) .. (-1.24,0.75);
\draw [rotate around={120:(0,-1)}] (-0.88,1.1) .. controls (-0.2,1.2) and (0.2,0.9) .. (0.45,0.9);
\draw [rotate around={120:(0,-1)}] (-1.13,1.08) to [out=190,in=90] (-1.25,0.98) to [out=-90,in=180] (-1.18,0.9);

\draw [rotate around={240:(0,-1)}] (-1.3,0.55) .. controls (-1,0.2) and (-0.2,0.8) .. (0.4,0.6);
\draw [rotate around={240:(0,-1)}] (-1.3,0.55) to [out=145, in=225] (-1.3,0.7) to [out=45,in=145] (-1.15,0.7);
\draw [rotate around={240:(0,-1)}] (-1.15,0.7) .. controls (-0.85,0.5) and (-0.75,0.7) .. (-0.7,0.8) .. controls (-0.67,0.9) and (-0.75,0.95) .. (-0.86,0.93);
\draw [rotate around={240:(0,-1)}] (-1,1) ellipse (.15 and .17);
\draw [rotate around={240:(0,-1)}] (-0.86,0.93) .. controls (-0.95,0.72) and (-0.95,0.69) .. (-0.99,0.65);
\draw [rotate around={240:(0,-1)}] (-1.15, 1) .. controls (-1.22,0.8) .. (-1.24,0.75);
\draw [rotate around={240:(0,-1)}] (-0.88,1.1) .. controls (-0.2,1.2) and (0.2,0.9) .. (0.45,0.9);
\draw [rotate around={240:(0,-1)}] (-1.13,1.08) to [out=190,in=90] (-1.25,0.98) to [out=-90,in=180] (-1.18,0.9);

\draw [color=white, line width = 3pt] (0.56,0.98) arc (18:330:0.6 and 0.75);
\draw (0.56,0.98) arc (18:330:0.6 and 0.75);
\draw [rotate around={120:(0,-1)}, color=white, line width = 3pt] (0.56,0.98) arc (18:330:0.6 and 0.75);
\draw [rotate around={120:(0,-1)}] (0.56,0.98) arc (18:330:0.6 and 0.75);
\draw [rotate around={-120:(0,-1)},color=white, line width = 3pt] (0.56,0.98) arc (18:330:0.6 and 0.75);
\draw [rotate around={-120:(0,-1)}] (0.56,0.98) arc (18:330:0.6 and 0.75);
\draw [rotate around={-60:(0,-1)}] (-0.56,0.98) arc (180-18:180-330:0.6 and 0.75);
\draw [rotate around={60:(0,-1)}] (-0.56,0.98) arc (180-18:180-330:0.6 and 0.75);
\draw [rotate around={180:(0,-1)}] (-0.56,0.98) arc (180-18:180-330:0.6 and 0.75);
\draw (0,-1) circle (2.5);
\draw (0,-1) circle (1);

\node at (4.4,1) {when $n=3$ this is a solid};
\node at (4.65,0.5) {torus (collapsed) lying on $R^3$};
\node at (4.2,-1.1) {$S^{n-2} \times D^2$};
\draw [->] (3.2,-1.1) -- (2.6,-1.1);
\draw [decorate, decoration = {brace,amplitude=4pt}] (2.6,-1.5) -- (2.6,-3.3);
\node at (3.4,-2.4) {$\epsilon > 0$};

\node at (-7,0) {\hspace{1em}};
	\end{tikzpicture}
	\caption{Image of PL immersion of $\de M$.}
	\label{basic-model}
\end{figure}

To recall, the mapping cylinder of $f: X \twoheadrightarrow Y$, is $MC_f \coloneqq X \times [0,1] \indep Y \slash_{x \times 1 \sim x^\pr \times 1 \iff f(x) = f(x^\pr)}$. A mapping cylinder structure on a space $Z$ is a homeomorphism to $MC_f$. In our applications $\de Z = X$ and the homeomorphism will be PL.

\subsection*{Rigidity/Flexibility of Bing spines}
Subjects that are entirely governed by an $h$-principle (e.g. the theory of immersions with positive codimension) are less interesting than those (e.g. symplectic geometry) which have both rigid and flexible aspects. Thus we note with interest that Bing spines exhibit both rigid and flexible behaviors. 

We say two Bing spines are \emph{regularly homotopic} if there is a PL regular homotopy $h_t$, $0 \leq t \leq 1$, of $\de M$ so that for all $t$, $h_t(\de M)$ is a Bing spine of $M$. Recall that a regular homotopy is a locally 1-1 PL map $\de \times I \ra M \times I$ that restricted at each level $t$ is a locally 1-1 PL map $\de M \times t \ra M \times t$. We require that the entire homotopy is locally flat, as explained above, which is stronger than merely asking that each time slice is locally flat. It follows that $h_t(\de M \times I)$ is \emph{also} a spine for $M \times I$. Regular homotopy is quite a strong equivalence relation compared to the concordance-based equivalence relation for which we prove an $h$-principle. So we call what we prove a ``weak $h$-principle.'' Regular homotopy can also be called ``sliced concordance.''

As an example, we will explain in Section \ref{survey of examples} how a Bing spine for the solid torus obtained by ``pushing in with $k$ hands", for $k>0$, is regularly homotopic to one obtained by ``pushing in with $(k+1)$ hands". Figure \ref{basic-model} illustrates this example for $k=6$. In contrast, we will also give examples of Bing spines that are not regularly homotopic, in particular, the obvious Bing spine, $S^2 \times \frac{1}{2}$, of $S^2\times I$ is not regularly homotopic to any other Bing spines. This exhibits a rigid aspect of Bing spines.

\subsection*{Acknowledgement}
The second named author is thankful to be at the Max Planck Institut f\"{u}r Mathematik in Bonn. She is grateful for its hospitality and financial support. The first named author would like to thank the referee for perceptive and useful comments.

\section{Proof of Theorem \ref{approximation}}
The idea of the proof is to take as input a general sequence: $c_1,\dots,c_k$ of simplicial collapses whose end result is a spine $S$ for $M$, and identify in the sequence certain \emph{dangerous} collapses of $n$-simplices where the immerse property of the map from $\de M$ to the partially-collapsed-$M$ would be lost. A simple example of a dangerous collapse for a 2D square is arrow $b$, collapsing the triangle $c_2$, shown in Figure \ref{resolve-cones}. The collapse $c_2$ is dangerous because it leaves behind a free vertical edge and mapping of the $S^1$ boundary over that edge creates a $180^\circ$ bend, which is not an immersion. To fix this problem we subdivide the cell to be collapsed, and collapse all of it except a small Bing ruff drawn in Figure \ref{resolve-cones} as a pair of black triangles. This Bing ruff is a spine for a small $S^{n-2} \times D^2$ neighborhood in the simplex labeled $c_2$ of the boundary of the free face about to be collapsed. The artwork may be slightly misleading as literally our pair of black triangles is an $S^0 \times D^2$, which is top dimensional, whereas we do collapse further and leave only a well-controlled Bing spine of $S^{n-2} \times D^2$, the Bing ruff. This trick rescues the immersion property of $\de M$ as we cover the original collapse sequence with a more refined one.

\subsection*{Building Bing ruffs in $\mathbf{S^{n-2} \times D^2}$ in all dimensions $\mathbf{n\geq 3}$}
Figure \ref{basic-model} (bottom) shows (partial collapse towards) the local model $BS_{n,\epsilon}$ when $n=3$ and for $\epsilon \approx \frac{2\pi l}{6}$, $l$ a length scale, and 6 being the ``number of hands" in Figure \ref{basic-model}. If the solid torus of figure \ref{basic-model} is metrically a circle of radius $l$ cross a disk of radius $\epsilon$, the projection to the first factor is an $\epsilon$-map in that it moves points no more than $\epsilon$. Notice that if we pick the number of bands $\#_h \geq \frac{2\pi l}{\epsilon}$, the collapse to the $\#_n$-BS will also be an $O(\epsilon)$-map, actually an $O(\epsilon)$-immersion. The ability to control how far these collapse move points will be critical to the approximation aspect of Theorem \ref{approximation}: The number of hands must increase and the disk radius must decrease for these (locally flat PL) immersions to approach zero motion.

Next we construct the local model on $S^2 \times D^2$, i.e. the case $n=4$. For a first approximation, consider the suspension of Figure \ref{basic-model}. In a movie of the suspension, the radius of $S^1$ in Figure \ref{basic-model} (the core $S^1$ lying in $\R^2 \subset \R^3$) grows from zero to $l$ and back to zero. As it stands the model has non-immersive behavior at the two radius $=0$ suspension points, where it locally has the structure of a cone. These bad cone points can be resolved locally using a concrete choice for a Bing spine for the 4-ball (an example of which is   the model shown in Figure 3 of \cite{freedman-nguyen}). In detail, this is done by marking a small solid torus $T$ within a chart in $S^3$. Then in $T$ draw the mirror image of Figure \ref{basic-model} above and glue the 4D model in to resolve the cone points.

More precisely, let $f_{3,\epsilon}: S^1 \times D^2 \ra BS_{3,\epsilon}$ be the collapsing immersion and $F: S^1 \times D^2 \times [-1,+1] \hookrightarrow BS_{3,\epsilon} \times [-1,+1]$ be $f \times \id_{[-1,1]}$. Using a MC structure induced by $f_{3,\epsilon}$ we may taper $F$ near $\{-1,+1\}$ so it is still a (lcoally flat PL) immersion on $S^1 \times \de D^2 \times [-1,1]$ but so that its image $F(S^1 \times \de D^2) \times [-1,1]$ becomes $BS_{3,\epsilon} \times [-1,1] \cup S^1 \times D^2 \cup \{-1,1\}$ as draw in Figure \ref{collapsing-immersion}

\begin{figure}[ht]
	\centering
	\begin{tikzpicture}[scale=1.5]
		\draw (-1,0) arc(0:-180:1 and 0.3);
\draw[dashed] (-1,0) arc(0:180:1 and 0.3);
\draw (-1,0) -- (-1,2);
\draw (-2,2) ellipse (1 and 0.3);
\draw (-3,2) -- (-3,0);
\node at (-2,2) {$Y$};
\node at (-2,0) {$Y$};
\node at (-2.55,0.75) {$F$};
\draw[->] (-2.7,0.5) -- (-2.4,0.5);
\draw[->] (-2.7,1) -- (-2.4,1);
\draw[->] (-2.7,1.5) -- (-2.4,1.5);
\draw[->] (-1.3,0.5) -- (-1.6,0.5);
\draw[->] (-1.3,1) -- (-1.6,1);
\draw[->] (-1.3,1.5) -- (-1.6,1.5);
\draw[gray] (-2.06,-0.1) -- (-2.06,1.9);
\draw[gray] (-1.92,2.1) -- (-1.92,0.1);
\node at (-0.3,2.3) {image $F$};
\draw[->] (-0.9,2.3) -- (-1.8,1.9);
\node at (-2,-0.7) {$S^1 \times D^2 \times [-1,1], BS_{3,\epsilon} \times [-1,1]$};
\node at (-2,-1.1) {with $BS_{3,\epsilon}$ drawn as $Y$};

\node at (0.25,1) {$\longrightarrow$};

\draw (3.5,0) arc(0:-180:1 and 0.3);
\draw[dashed] (3.5,0) arc(0:180:1 and 0.3);
\draw (3.5,0) -- (3.5,2);
\draw[pattern = north east lines, pattern color = gray] (2.5,2) ellipse (1 and 0.3);
\path[pattern = north east lines, pattern color = gray] (2.5,0) ellipse (1 and 0.3);
\draw (1.5,2) -- (1.5,0);
\node at (2.5,2) {$Y$};
\node at (2.5,0) {$Y$};
\node at (1.95,1) {$F_\text{new}$};
\draw[->] (1.6,0.5) -- (2.2,0.4);
\draw[->] (1.6,0.7) -- (2.2,0.7);
\draw[->] (1.6,0.3) -- (1.9,0.1);
\draw[->] (1.6,1.3) -- (2.2,1.3);
\draw[->] (1.6,1.5) -- (2.2,1.6);
\draw[->] (1.6,1.7) -- (1.9,1.9);
\draw[->] (3.4,0.6) -- (2.8,0.5);
\draw[->] (3.4,0.8) -- (2.8,0.8);
\draw[->] (3.4,0.4) -- (3.1,0.2);
\draw[->] (3.4,1.2) -- (2.8,1.2);
\draw[->] (3.4,1.4) -- (2.8,1.5);
\draw[->] (3.4,1.6) -- (3.1,1.8);
\draw[gray] (4.5-2.06,-0.1) -- (4.5-2.06,1.9);
\draw[gray] (4.5-1.92,2.1) -- (4.5-1.92,0.1);
\node at (2.5,-0.7) {For $F_\text{new}$ the image includes};
\node at (2.5,-1.1) {$S^1 \times D^2 \times \{-1,1\}$};
	\end{tikzpicture}
	\caption{}
	\label{collapsing-immersion}
\end{figure}

Using $F_{\text{new}}$ on $S^1 \times D^2 \times [-1,1] \subset S^1 \times D^2 \times [-1,1] \cup B_+^4 \cup B_-^4 \cong S^2 \times D^2$ we obtain an immersion of $\de(S^2 \times D^2)$ onto $BS_{3,\epsilon} \times [-1,1] \cup \de B_+^4 \cup \de B_-^4$. Now compose this immersion with the immersion from $\de B_{\pm}^4$ to a BS for $B^4$. The composition of locally flat PL immersions is also a locally flat PL immersion, resolving the original singular points at the origins of $B_{\pm}^4$. (In our notation, we think of $S^2$ as $S^1 \times [-1,1] \cup D_+^2 \cup D_-^2$ and cross all terms with $D^2$ to obtain the isomorphism $S^1 \times D^2 \times [-1,1] \cup B_+^4 \cup B_-^4 \cong S^2 \times D^2$ used above.)

In a similar manner we may construct our local model on $S^{n-2} \times D^2$ from $S^{n-3} \times D^2$, first by suspending and then resolving the non-immersed suspension points by the local models of $(n-1)$-dimensional Bing houses given in \cite{freedman-nguyen}. This completes the first step in the proof of Theorem 1; the local models, or Bing ruffs, in $S^{n-2} \times D^2$ are now in hand, with the collapse immersions $f_{n,\epsilon}$ being $\epsilon$-maps, provided radius of $D^2$ is taken sufficiently small.

\subsection*{Using local models to deal with dangerous collapses}
Proceed through the collapse sequence until the first dangerous collapse is met at simplex $\sigma$ (say $c_2$ in Figure \ref{resolve-cones}). Near the boundary of this face (PL homeomorphic to $S^{n-2}$) identify an embedded $S^{n-2} \times D^2$, corresponding to the location of the two black triangles in Figure \ref{resolve-cones}. (Two triangles in the illustration because $S^0 \cong$ two points.) Now make a fine subdivision of $\sigma$ for which this $S^{n-2} \times D^2$ is a subcomplex and furthermore, $BS_{n,\epsilon} \subset S^{n-2} \times D^2$ is also an $(n-1)$-subcomplex. Now instead of collapsing $\sigma$ in one step by pushing in the free face of $\sigma$, collapse in many steps by first collapsing $\sigma$ to its subspace $S^{n-2} \times D^2$, and then proceeding to collapse $S^{n-2} \times D^2$ to the Bing spine $BS_{n,\epsilon}$. At this point, schematically we have reached the far end of the arrow labeled $\gamma$ in \ref{resolve-cones}. The reader should picture the boundary of the original square now wrapping immersively around the perimeter (after arrow $\gamma$) going up and around the vertical flag. Again note that the illustration may mislead as the two black triangles represent the $(n-1)$-dimensional BS and not the $n$-dimensional $S^{n-2} \times D^2$. At this point we have covered the first dangerous collapse with a long sequence of $\de$-immersive collapses the end result of which is an immersion from $\de M$ which is $\epsilon$-close to the original collapse sequence up to this point.

In the original collapse sequence there may now be cells of $\dim < n$ to be collapsed. As indicated by arrow $\delta$ (Figure \ref{resolve-cones}). We do these collapses, but not fully; we ``collapse'' these lower dimensional cells \emph{sufficiently} to so that after each ``collapse'' our immersion from $\de M$ remains $O(\epsilon)$ close to the original collapse sequence. A partial collapse is a simplicial homeomorphism approximating collapse. Concantenations of partial collapses also approximate concantenations of actual collapses.

When we next come, in the original sequence, to an $n$-dimension simplex with a free face to be collapsed we repeat the entire process. Keeping track of epsilons is trivial since once a Bing ruff is created (resulting in an $\epsilon$-approximation) it never participates in forming a subsequent Bing ruff (one ruff per dangerous collapse) so we do not need to arrange any inductive hierarchy of epsilons.

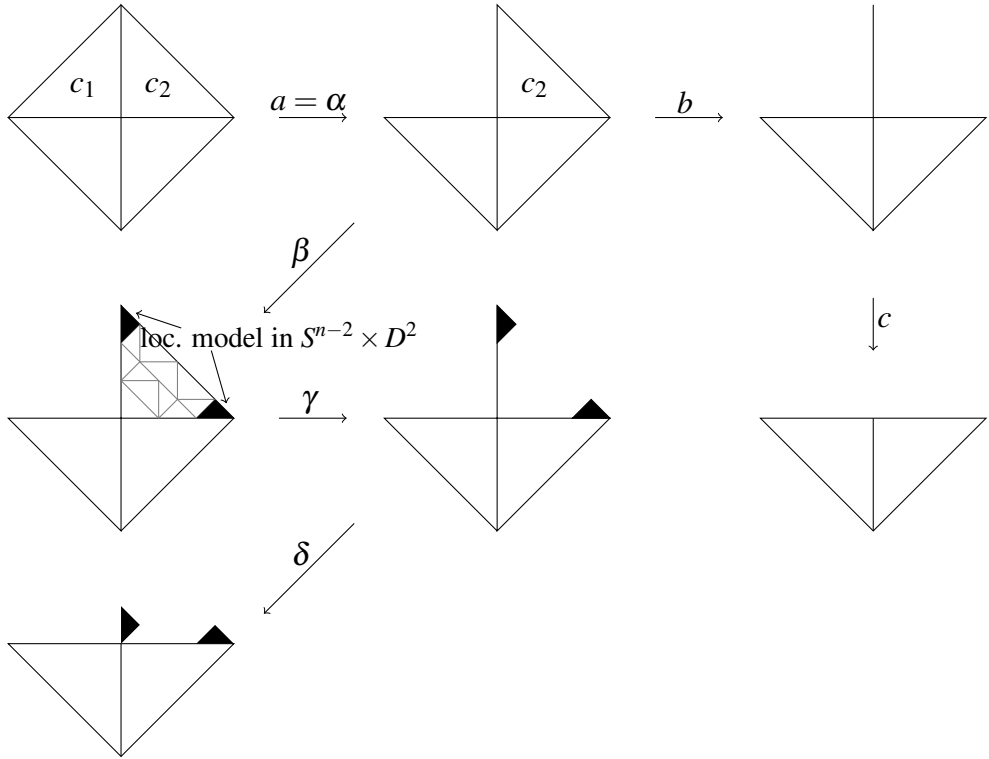
\begin{figure}[!ht]
	\centering
	\begin{tikzpicture}
		\draw (0,0) -- (3,0) -- (1.5,1.5) -- (1.5,-1.5) -- cycle;
		\draw (1.5,1.5) -- (0,0);
		\draw (1.5,-1.5) -- (3,0);
		\node at (1,0.4) {$c_1$};
		\node at (2,0.4) {$c_2$};
		\draw [->] (3.6,0) -- (4.5,0);
		\node at (4,0.2) {$a=\alpha$};

		\draw (5,0) -- (8,0) -- (6.5,1.5) -- (6.5,-1.5) -- cycle;
		\draw (6.5,-1.5) -- (8,0);
		\node at (7,0.4) {$c_2$};
		\draw [->] (8.6,0) -- (9.5,0);
		\node at (9,0.2) {$b$};
		\draw (11.5,1.5) -- (11.5,-1.5) -- (13,0) -- (10,0) -- (11.5,-1.5);
		\draw [->] (11.5,-2.4) -- (11.5,-3.1);
		\node at (11.65,-2.7) {$c$};

		\draw [->] (4.6,-1.4) -- (3.4,-2.6);
		\draw (0,-4) -- (3,-4) -- (1.5,-2.5) -- (1.5,-5.5) -- cycle;
		\draw (1.5,-5.5) -- (3,-4);
		\draw [color = gray] (1.5,-3.5) -- (2,-4);
		\draw [color=gray] (1.5,-3) -- (2.5,-4);
		\draw [color=gray] (1.75,-3.25) -- (1.5,-3.5) -- (2,-3.5) -- (2,-4) -- (2.25,-3.75);
		\draw [color=gray] (1.5,-3) -- (1.75,-2.75) -- (1.75,-3.25) -- (2.25,-3.25) -- (2.25,-3.75) -- (2.75,-3.75) -- (2.5,-4);
		\path [fill=black] (1.5,-2.5) -- (1.75,-2.75) -- (1.5,-3) -- cycle;
		\path [fill=black] (3,-4) -- (2.5,-4) -- (2.75,-3.75) -- cycle;
		\node at (3.9,-1.8) {$\beta$};
		\node at (3.6,-2.9) {\small{loc. model in $S^{n-2} \times D^2$}};
		\draw [->] (2.3,-2.8) -- (1.7,-2.6);
		\draw [->] (2.7,-3.1) -- (2.9,-3.8);
		\draw [->] (3.6,-4) -- (4.5,-4);
		\node at (4,-3.8) {$\gamma$};

		\draw (6.5,-2.5) -- (6.5,-5.5) -- (8, -4) -- (5,-4) -- (6.5,-5.5);
		\draw [fill=black] (6.5,-2.5) -- (6.75,-2.75) -- (6.5,-3) -- cycle;
		\draw [fill=black] (8,-4) -- (7.5,-4) -- (7.75,-3.75) -- cycle;

		\draw (10,-4) -- (13,-4) -- (11.5,-5.5) -- cycle;
		\draw (11.5,-4) -- (11.5,-5.5);
		\draw [->] (4.6,-5.4) -- (3.4,-6.6);
		\node at (3.9,-5.8) {$\delta$};

		\draw (0,-7) -- (3,-7) -- (1.5,-8.5) -- cycle;
		\draw (1.5,-8.5) -- (1.5,-7);
		\path [fill=black] (1.5,-6.5) -- (1.75,-6.75) -- (1.5,-7) -- cycle;
		\path [fill=black] (3,-7) -- (2.5,-7) -- (2.75,-6.75) -- cycle;
	\end{tikzpicture}
	\caption{The dangerous collapse $c_2$ (arrow $b$) is avoided by covering the latin sequence $a,b,c,\dots$ with the greek sequence $\alpha,\beta,\gamma,\delta,\dots$.}
	\label{resolve-cones}
\end{figure}

Covering the collapse sequence $c_1, \dots, c_k$ as shown in Figure \ref{resolve-cones} whenever a non-immersive (``dangerous") collapses is about to occur constitutes an algorithm for producing the approximation claimed in Theorem 1. The key point is that by allowing the immersion of $\de M$ to wrap around the Bing ruff instead of sharply folding (arrow b) the immersive property will be maintained during the covered collapse sequence. Also by picking $\epsilon$ sufficiently small, and making the ``partial collapses" close to their corresponding (full) collapses, the $\epsilon$-approximation condition will be fulfilled; the resulting BS consists of the original spines together with Bing ruffs attached to $S$ and within radius $\epsilon$ of their attaching region.
\qed

\section{Rigidity versus weak $h$-principle}
In this section we make some initial observations on this dichotomy, with the space of Bing spines BS$(M)$ for $M = S^2 \times [-1,1]$, $M = S^1 \times B^2$, and $M = B^3$ as examples, finding both rigid and flexible behavior. Then we proceed to establish a weak $h$-principle.

\subsection{A short survey of examples}\label{survey of examples}
Let us consider some examples of Bing spines for $B^3, S^1 \times B^2$ and $S^2 \times I$. In Figure \ref{examples}, examples $a_1, a_2, ...$ are Bing spines for $B^3$, examples $b_1, b_2, ...$ are Bing spines for $S^1\times B^2$, and $c_0, c_1, c_2, ...$ are Bing spines for $S^2\times I$. 

\begin{figure}[hp]
	\centering
	\begin{tikzpicture}
		\input{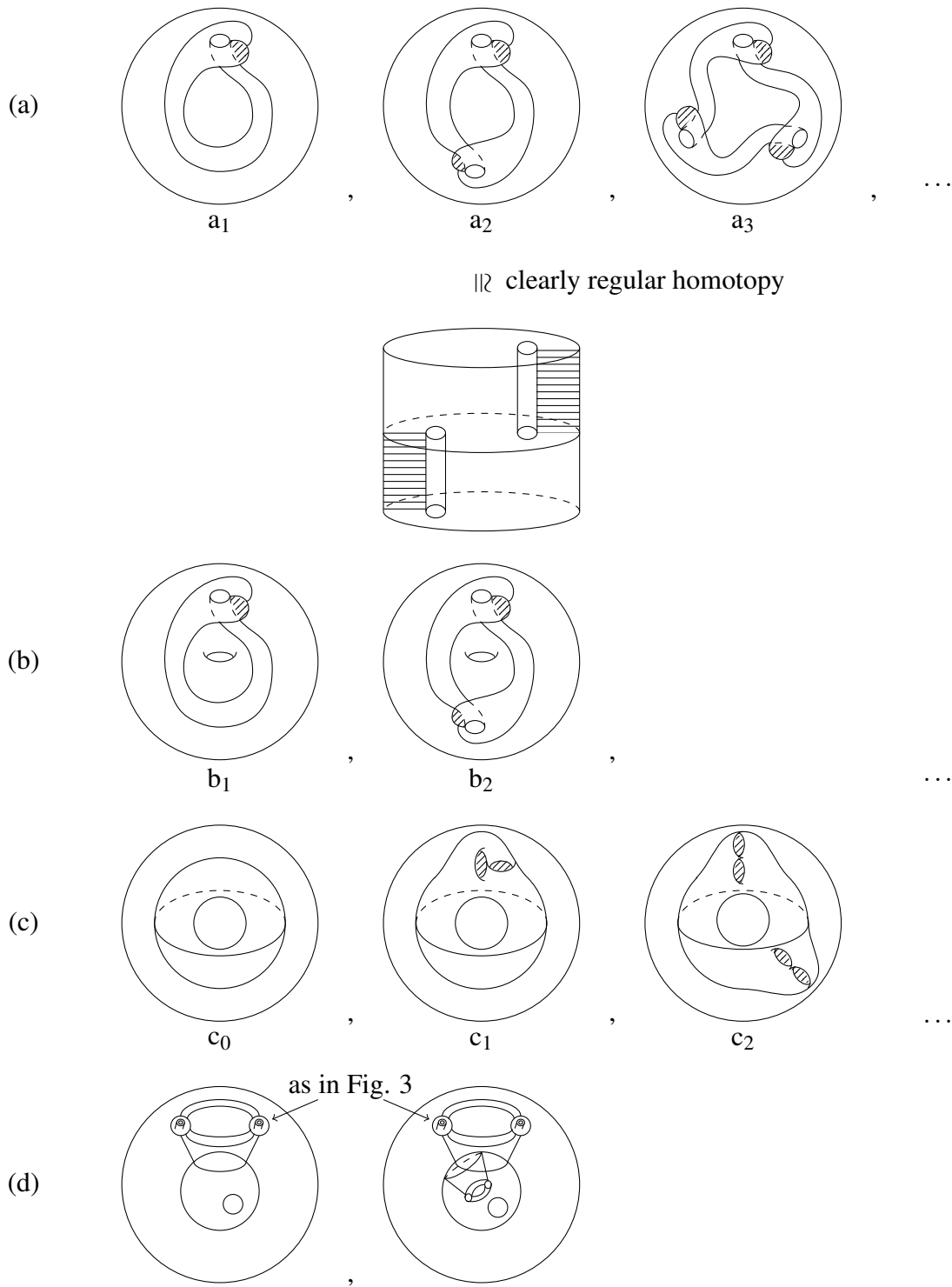}
	\end{tikzpicture}
	\caption{(a) Bing houses in $B^3$ (b) Bing spines in $S^1 \times B^2$ (c) Bing spines in $S^2 \times I$ (d) More Bing spines for $S^2 \times I$, which share with $c_0$ the property that their ``separating set'' (points of BS in the closure of both components of $S^2 \times I \setminus B$) is homeomorphic to $S^2$.}
	\label{examples}
\end{figure}

\begin{observation} Examples $a_1, a_2, a_3, \dots$ are all regularly homotopic, as are examples $b_1, b_2, b_3, \dots$. However, while $c_1, c_2, c_3, \dots$ are also regularly homotopic, we show below that $c_0$ is \emph{not} regularly homotopic to $c_i$ for any $i \geq 1$.
\end{observation}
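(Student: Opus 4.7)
My plan is to treat the positive and negative halves of the observation separately. For the positive assertions --- that each family $\{a_i\}$, $\{b_i\}$, $\{c_i\}_{i \geq 1}$ consists of mutually regularly homotopic Bing spines --- I will construct explicit regular homotopies that interpolate between consecutive members. The move from a $k$-hand Bing spine to a $(k+1)$-hand one is supported in a small PL ball: we deform $\de M$ by gradually pushing in one additional hand (a small copy of the Bing ruff built in Section 2) while mildly rearranging the existing hands to make room. Because the Bing ruff itself admits a $1$-parameter family of locally flat PL immersions, indexed by the radius of its $D^2$ factor (and by the number of hands, via the construction illustrated for $k=6$ in Figure~\ref{basic-model}), this interpolation preserves the immersion property at every intermediate time. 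Concatenating such local moves (and their reverses) then connects any two members of a given family.

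The rigidity of $c_0$ is the heart of the matter. The distinguishing invariant is the maximum multiplicity of the parametrization $c|_{\de M}\colon \de M \to S$: for $c_0 = S^2 \times \{1/2\}$ every spine point has exactly two preimages (one from each boundary sphere), and the spine is itself a $2$-manifold. For any $c_i$ with $i \geq 1$ the Bing ruff introduces a nonempty singular $1$-skeleton at which three or more sheets of $\de M$ meet. I will argue that under any regular homotopy $h_t$ starting at $h_0 = c|_{\de M}$ of $c_0$, the multiplicity of $h_t$ on its image remains identically $2$, forcing $h_t(\de M)$ to be a topological $2$-sphere at every time and hence excluding any $c_i$ with $i \geq 1$ as a possible endpoint. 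Concretely I will examine the total immersion $H\colon \de M \times I \to M \times I$ provided by the regular homotopy and rule out the codimension-one ``triple-point birth'' required to raise the multiplicity above $2$: at $t = 0$ the image has multiplicity $2$ everywhere and contains no triple point, and the first appearance of a triple point at some time $t^*$ would produce three locally flat $2$-sheets meeting along a newly-formed stratum, a configuration incompatible at $t^*$ with the locally flat PL immersion property --- the nascent triple locus is either a free face (contradicting the spine property) or forces a cone singularity of the immersion, of exactly the type Bing ruffs were introduced in Section 2 to avoid.

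The main obstacle is this local analysis at the putative birth time $t^*$: one must combine the locally flat PL immersion definition from the introduction with the combinatorial structure of a PL collapse to show rigorously that no such triple-point birth is compatible with $h_{t^*}(\de M)$ being a Bing spine. A parallel case that must be handled is the birth of a double curve that pinches off a free face at $t^*$, which would likewise destroy the spine structure. Once these events are excluded, multiplicity $2$ is preserved for all $t$, the image $h_t(\de M)$ is a $2$-manifold throughout the homotopy, and the observation follows; the fact that this rigidity argument fails for $c_i$ with $i \geq 1$ (because their multiplicity is already $>2$ and ``triple-point annihilations'' can be arranged by inverse Bing-ruff moves as in the positive half) is what permits the flexibility within the $\{c_i\}_{i \geq 1}$ family.
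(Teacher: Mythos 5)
Your decomposition — explicit regular homotopies within each family, plus a rigidity invariant obstructing new intersections on $c_0$ — parallels the paper's, but both halves have problems as written. For the positive half, the paper constructs the regular homotopies between the $a_i$, the $b_i$, and the $c_i$ (for $i\geq 1$) by \emph{handle slides of double-point disks}, distinguishing permissible slides (such as the disk labeled $z$ in Figure~\ref{regular-homotopy}, which joins two previously separated sheets at $a$ and $b$) from forbidden ones ($x$ and $y$, which join a sheet to itself and so create a non-immersed point at $x\cap y$); at no moment is a ``hand'' created from nothing. Your mechanism — ``gradually pushing in one additional hand,'' with a family ``indexed by the radius of its $D^2$ factor'' — fails at exactly the moment the extra hand appears: at radius $0$ the image is not a Bing spine and the parametrization is not a locally flat immersion, and the number of hands is not a continuous parameter at all, so invoking it ``via the construction illustrated for $k=6$'' is circular — that is precisely the move you need to build.

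For rigidity, your key insight — that a regular homotopy leaving $c_0$ would force a failure of the spacetime immersion — is correct and is exactly what the paper's Proposition~1 exploits, but the paper's execution is sharper and you should compare. The paper notes that if $c^\pr_t\not\cong S^2$ then for each $t>0$ there is a pair of distinct points $a_t,b_t$ in a \emph{single} boundary sphere with $\gamma_t(a_t)=\gamma_t(b_t)$, and an accumulation-point argument then produces a single point near which the total map $\Gamma\colon S^2\times p\times[-\epsilon,\epsilon]\to S^2\times I\times[-\epsilon,\epsilon]$ is not locally injective, contradicting the definition of regular homotopy. Your version instead tries to classify the local configurations at a putative birth time $t^*$ and rule each one out (``either a free face\ldots or forces a cone singularity''), which is an incomplete case analysis and, more importantly, misplaces the contradiction: it lives in the immersion $\de M\times I\to M\times I$, not in the single slice $h_{t^*}(\de M)$. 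Also, tracking ``maximum multiplicity'' is an awkward proxy for the cleaner invariant implicit in the paper — injectivity of $\gamma_t$ on each boundary component, equivalently $c^\pr_t\cong S^2$ — which feeds directly into the accumulation argument and avoids having to separately handle double-curve births versus triple-point births.
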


The regular homotopies above are quite similar, they involve finding an appropriate handle slide of double point disks. A double point disk is a maximal disk in the BS over whose interior the immersion is precisely two to one. In Figure \ref{regular-homotopy} we show the regular homotopy from $c_2$ to $c_1$.

\begin{figure}[!ht]
	\centering
	\begin{tikzpicture}
		\draw (1,0) to (3,0) to [out=0, in=-90] (4,0.5) to (4,2) to [out=90,in=0] (3.2,3) to [out=180,in=0] (2,2.2) to [out=180,in=0] (0.8,3) to [out=180,in=90] (0,2) to (0,0.5) to [out=-90,in=180] (1,0);
\node at (2,-0.3) {$(c_2)$};
\draw [pattern = horizontal lines] (3.2,1.45) ellipse (0.25 and 0.3);
\draw (3.2,1.75) arc (-90:90:0.15 and 0.625);
\draw [dashed] (3.2,1.75) arc (-90:-270:0.15 and 0.625);
\path [pattern=north east lines] (3.2,2.375) ellipse (0.15 and 0.625);
\draw [pattern=horizontal lines] (0.8,1.45) ellipse (0.25 and 0.3);
\draw (0.8,1.75) arc (-90:90:0.15 and 0.625);
\draw [dashed] (0.8,1.75) arc (-90:-270:0.15 and 0.625);
\path [pattern=north west lines] (0.8,2.375) ellipse (0.15 and 0.625);
\draw [->] (4.5,1.5) -- (5.5,1.5);

\draw (7,0) to (9,0) to [out=0, in=-90] (10,0.5) to (10,2) to [out=90,in=0] (9.2,3) to [out=180,in=0] (8,2.2) to [out=180,in=0] (6.8,3) to [out=180,in=90] (6,2) to (6,0.5) to [out=-90,in=180] (7,0);
\draw [pattern = horizontal lines] (6.8,1.45) ellipse (0.25 and 0.3);
\draw (6.8,1.75) arc (-90:90:0.15 and 0.625);
\draw [dashed] (6.8,1.75) arc (-90:-270:0.15 and 0.625);
\path [pattern=north west lines] (6.8,2.375) ellipse (0.15 and 0.625);
\draw [pattern = horizontal lines] (9.2,1.45) ellipse (0.25 and 0.3);
\draw (7.4,2.4) to [out=90,in=135] (7.5,2.46) to [out=-45,in=90] (7.6,2.3) .. controls (7.6,0.9) and (9.2,2.6) .. (9.2,1.75);
\draw [dashed] (7.4,2.3) .. controls (7.4,-0.05) and (9.2,3.5) .. (9.2,1.75);
\path [pattern = north east lines] (7.4,2.4) to [out=90,in=135] (7.5,2.46) to [out=-45,in=90] (7.6,2.3) .. controls (7.6,0.9) and (9.2,2.6) .. (9.2,1.75) .. controls (9.2,3.5) and (7.4,-0.05) .. (7.4,2.3) -- cycle;

\draw (1,-4.5) to (3,-4.5) to [out=0, in=-90] (4,-4) to (4,-2.5) to [out=90,in=0] (3.2,-1.5) to [out=180,in=0] (2,-2.3) to [out=180,in=0] (0.8,-1.5) to [out=180,in=90] (0,-2.5) to (0,-4) to [out=-90,in=180] (1,-4.5);
\draw [pattern = horizontal lines] (0.8,1.7-4.75) ellipse (0.25 and 0.3);
\draw [pattern = horizontal lines] (3.2,1.7-4.75) ellipse (0.25 and 0.3);
\draw [dashed] (0.8,-2.75) arc (-90:-270:0.15 and 0.625);
\draw (0.8,-2.75) arc (-90:90:0.15 and 0.625);
\draw [->] (5.5,-0.5) -- (4.5,-1.5);
\draw (0.65,-2.1) .. controls (1.3,-3.5) and (2.9,-2.35) .. (3.1,-2.75);
\draw [dashed] (0.65,-2.1) .. controls (1.3,-1.9) and (1.1,-2.3) .. (1.5,-2.4) .. controls (2.3,-2.5) and (3.6,-1.9) .. (3.1,-2.75);
\path [pattern = north east lines] (0.65,-2.1) .. controls (1.3,-3.5) and (2.9,-2.35) .. (3.1,-2.75) .. controls (3.6,-1.9) and (2.3,-2.5) .. (1.5,-2.4) .. controls (1.1,-2.3) and (1.3,-1.9) .. (0.65,-2.1);
\path [pattern=north west lines] (0.8,2.375-4.5) ellipse (0.15 and 0.625);

\draw [->] (4.5,-3) -- (5.5,-3);
\draw (7,-4.5) to (9,-4.5) to [out=0, in=-90] (10,-4) to (10,-2.5) to [out=90,in=0] (9.2,-1.5) to [out=180,in=0] (8,2.2-4.5) to [out=180,in=0] (6.8,-1.5) to [out=180,in=90] (6,-2.5) to (6,-4) to [out=-90,in=180] (7,-4.5);
\draw [pattern = horizontal lines] (9.2,1.45-4.5) ellipse (0.25 and 0.3);
\draw [pattern = horizontal lines] (6.8,1.45-4.5) ellipse (0.25 and 0.3);
\draw [dashed] (6.8,1.75-4.5) arc (-90:-270:0.15 and 0.625);
\draw (6.8,1.75-4.5) arc (-90:90:0.15 and 0.625);
\path [pattern=north west lines] (6.8,2.375-4.5) ellipse (0.15 and 0.625);
\draw (8.95,-3) arc (0:-180:0.95 and 0.2);
\draw [dashed] (8.95,-3) arc (0:180:0.95 and 0.2);
\path [pattern = north east lines] (8,-3) ellipse (0.95 and 0.2);
\node at (7.1,-3.4) {$a$};
\node at (8.5,-3.4) {$z$};
\node at (9.5,-3.4) {$b$};
\draw [->] (5.9,-4.8) -- (4.3,-5.9);
\node at (6.9,-5.5) {\small{collapse horizontal disk}};

\draw (1,-9) to (3,-9) to [out=0, in=-90] (4,-8.5) to (4,-7) to [out=90,in=0] (3.2,-6) to [out=180,in=0] (2,-6.8) to [out=180,in=0] (0.8,-6) to [out=180,in=90] (0,-7) to (0,-8.5) to [out=-90,in=180] (1,-9);
\draw [dashed] (0.8,-2.75-4.5) arc (-90:-270:0.15 and 0.625);
\draw (0.8,-2.75-4.5) arc (-90:90:0.15 and 0.625);
\draw (0.8,-7.25) to [out=180,in=90] (0.5,-7.6) to [out=-90,in=180] (0.8,-7.9) to [out=0, in=180] (3.2,-7.25) to [out=0,in=90] (3.5,-7.6) to [out=-90,in=0] (3.2,-7.9) to [out=180, in=0] (0.8,-7.25);
\path [pattern = horizontal lines] (0.8,-7.25) to [out=180,in=90] (0.5,-7.6) to [out=-90,in=180] (0.8,-7.9) to [out=0, in=180] (3.2,-7.25) to [out=0,in=90] (3.5,-7.6) to [out=-90,in=0] (3.2,-7.9) to [out=180, in=0] (0.8,-7.25);
\draw [->] (4.5,-7.5) -- (5.5,-7.5);
\path [pattern=north west lines] (0.8,2.375-9) ellipse (0.15 and 0.625);

\draw (7,-9) to (9,-9) to [out=0, in=-90] (10,-8.5) to (10,-7) to [out=90,in=0] (9.2,-6) to [out=180,in=0] (8,2.2-9) to [out=180,in=0] (6.8,-6) to [out=180,in=90] (6,-7) to (6,-8.5) to [out=-90,in=180] (7,-9);
\path [pattern=north west lines] (6.8,2.375-9) ellipse (0.15 and 0.625);
\draw (6.8,1.75-9) arc (-90:90:0.15 and 0.625);
\draw [dashed] (6.8,1.75-9) arc (-90:-270:0.15 and 0.625);
\draw [pattern = horizontal lines] (8,-7.35) to [out=180,in=0] (6.8,-7.3) to [out=180, in=90] (6.5,-7.5) to [out=-90, in=180] (6.8,-7.7) to [out=0,in=180] (8,-7.65) to [out=0,in=180] (9.2,-7.7) to [out=0,in=-90] (9.5,-7.5) to [out=90,in=0] (9.2,-7.3) to [out=180,in=0] cycle;
\node at (7.2,-6.7) {$x$};
\node at (9.5,-7.8) {$y$};
\node at (8,-9.3) {$(c_1)$};
	\end{tikzpicture}
	\caption{}
	\label{regular-homotopy}
\end{figure}

Observe that regular homotopy does \emph{not} permit the collapsing of either disks $x$ or $y$ in the last panel of Figure \ref{regular-homotopy} (collapsing either $x$ or $y$ produces a non-immersed point at $x \cap y$ in one of the two sheets covering that point.) However, collapsing $z$ has no such effect; it joins two previously separated sheets at points $a$ and $b$, rather than joining a sheet to itself. No singularity is created.

\begin{figure}[hp]
	\centering
	\begin{tikzpicture}[scale=1.25]
		\input{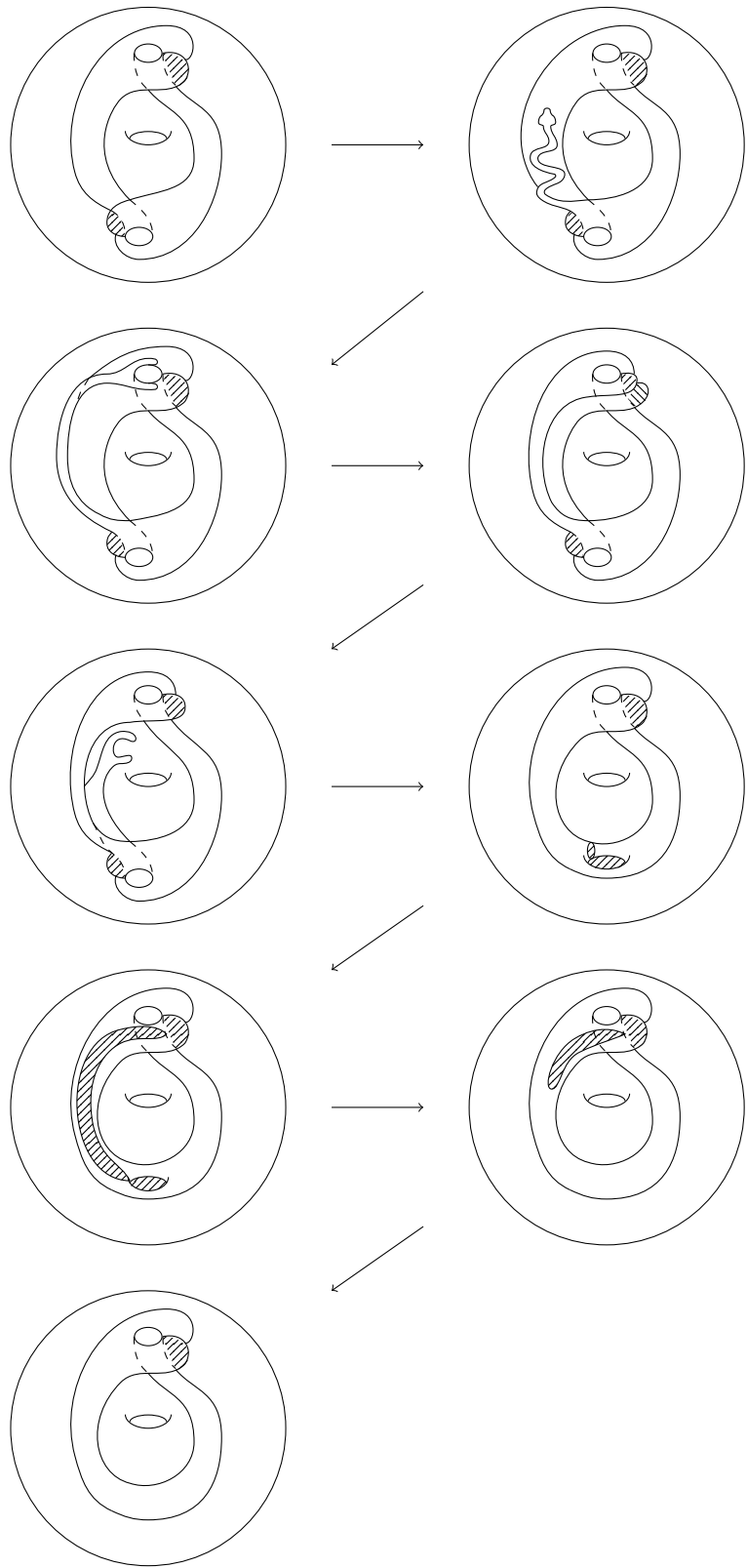}
	\end{tikzpicture}
	\caption{Regular homotopy from $b_2$ to $b_1$.}
\end{figure}

We conclude this short survey of examples with an example of rigidity among Bing spines: $c_0$ is not regularly homotopic to $c_i$, $i \leq 1$. Moreover, $c_0$ is completely rigid in the following sense.

\begin{proposition}
Any spine $c^\pr \subset S^2 \times I$ which is regularly homotopic to $c_0$ is in fact $h(c_0)$ for some PL homeomorphism $h: S^2 \times I \ra S^2 \times I$.
\end{proposition}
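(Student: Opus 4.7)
The plan is to establish that each intermediate spine $c_t := h_t(\de M)$ of the regular homotopy is an embedded locally flat PL $2$-sphere throughout, and then apply the PL isotopy extension theorem to supply the required $h$. Write $h_t = h_t^+ \sqcup h_t^-$ according to the two components $S^2_\pm$ of $\de M$; at $t=0$ both $h_0^\pm$ are PL embeddings whose images coincide with $c_0 = S^2 \times \{1/2\}$, so the double-point set of $h_0$ is empty apart from the tautological $2$-to-$1$ pairing of the two sheets onto $c_0$.

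The key step is a rigidity argument based on the number of complementary components. By the mapping cylinder structure of a Bing spine, $M \setminus c_t$ has exactly two components---one for each component of $\de M$---throughout the homotopy. In a generic PL regular homotopy of a $2$-manifold in a $3$-manifold the only moves introducing self-intersections are ``finger moves,'' pushing a piece of one sheet transversely through another to create a new double curve. I would argue that such a finger move locally converts two nearby parallel disks in a chart $B^3$ into a ``top-hat'' bump piercing a flat disk, and the complement of this configuration in $B^3$ has exactly $4$ regions compared to $3$ before the move---the extra region being the small pocket trapped above the flat sheet inside the cap. This local $+1$ translates to a global $+1$ in the number of components of $M \setminus c_t$, contradicting the fixed $2$-component count. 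Hence no finger move can occur; since we start from $c_0$ with no double curves there are no Whitney moves possible either, and the double-point set stays empty, so each $c_t$ is an embedded $2$-sphere. Each $h_t^\pm : S^2 \to c_t$ is then a local PL homeomorphism between compact simply-connected surfaces, hence a PL homeomorphism.

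With $\{c_t\}_{0 \le t \le 1}$ realized as a PL isotopy of embedded locally flat $2$-spheres in $M$, the PL isotopy extension theorem produces an ambient PL isotopy $F_t : M \to M$ with $F_0 = \id$ and $F_t(c_0) = c_t$. Setting $h := F_1$ yields $h(c_0) = c'$, as required.

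The main obstacle is the finger-move analysis: one must rigorously rule out non-generic regular homotopies in which a finger move could be compensated, at the very same instant, by a ``hole-opening'' that glues the newly created pocket to an existing complement component, keeping the $2$-component count intact. The plan here is to decompose any such non-generic event into a one-parameter family of moves and verify that the intermediate configurations cannot all be Bing spines, or alternatively to track a mod-$2$ invariant of the double-curve set of $h_t$ that prevents creation of a double curve out of the empty configuration, thereby completing the rigidity claim.
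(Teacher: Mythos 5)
Your plan is a genuinely different route from the paper's, and in its current form it has a real gap that you yourself flag. The paper does not decompose the regular homotopy into generic moves at all. Instead it globalizes: writing $\gamma'_t\colon S^2\times\{\pm 1\}\to c'_t$ for the mapping cylinder parametrizations, it assembles the single map $\Gamma(s,p,t)=\gamma'_t(s,p)$ on $(S^2\times\{\pm1\})\times[-\epsilon,\epsilon]$. The definition of regular homotopy in this paper requires this assembled map to be a locally flat, locally injective PL map. If $c'_t$ is not an embedded sphere for $t>0$ while $c'_0=c_0$ is, then for each $t>0$ there is a pair $a_t\ne b_t$ in a fixed boundary component with $\gamma'_t(a_t)=\gamma'_t(b_t)$; by compactness these pairs accumulate, and they must accumulate to a \emph{single} point (distinct limits would give a double point of $c_0$). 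At that accumulation point $\Gamma$ fails to be locally injective, a contradiction. This is short, works without any genericity assumption, and never needs the complement-component analysis.

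The gap in your plan is the one you acknowledge at the end: a finger move does \emph{not} obviously change the number of components of $M\setminus c_t$. Your local chart count (three regions becoming four) is correct, but the extra ``pocket'' may be globally identified with a pre-existing component along a path that leaves the chart; nothing in the finger-move picture alone rules this out, and your fallback proposals (a one-parameter decomposition of non-generic events, or a mod-$2$ invariant of the double curve set) are not carried out. There is a second, unaddressed hypothesis: you restrict attention to \emph{generic} PL regular homotopies decomposed into finger moves and triple-point passages, but the proposition is about arbitrary PL regular homotopies, and the reduction to the generic case is not free in the PL category. The paper's accumulation argument makes both problems disappear, since it never isolates individual singular events and never invokes genericity. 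Your concluding isotopy-extension step, once each $c_t$ is known to be an embedded locally flat sphere, is sound and indeed supplies the homeomorphism $h$; it is closing the embeddedness claim that needs the paper's (or some other) argument.
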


\begin{proof}
	Let $c^\pr_t$, $t \in [-\epsilon,\epsilon]$, be a Bing spine not of the form $h(c_0)$ for $t > 0$ but with $c_0^\pr = c_0$. Write $\gamma^\pr_t: S^2 \times \{-1,1\} \ra c_t^\pr$ for the function defining the mapping cylinder structure. Since $c_t^\pr \not\cong S^2$, $t > 0$, for all $t \geq 0$ there are a pair of distinct points $a_t, b_t \in S^2 \times \{+1\}$ or $a_t, b_t \in S^2 \times \{-1\}$ so that $\gamma_t(a_t) = \gamma_t(b_t)$. Let $a$ be an accumulation point of $\{a_t, t>0\}$ as $t \ra 0$ in $S^2 \times p$ for $p \in \{-1, 1\}$. The map $\Gamma: S^2 \times p \times [-\epsilon, \epsilon] \ra S^2 \times I \times [-\epsilon, \epsilon]$, defined by $\Gamma(s,p,t) = \gamma_t(s,p)$, is \emph{not} an immersion near $a$ as those pairs $(a_i, b_i)$ which lie in $S^2 \times p$ are, for each $i$, mapped together: $\Gamma$ is not locally injective near $a$.
\end{proof}

\begin{remark}
	The same argument shows that for all closed manifolds $M \cong N \times [-1,1]$, or more generally $M$ an $I$-bundle over $N$, the Bing spine $N \times 0$ is rigid.
\end{remark}

In fact, we conjecture that any Bing spine of $\Sigma^2 \times I$ that can be smoothly parametrized by an immersion of $\Sigma^2$ must be an embedded $\Sigma^2$. In this direction we prove:

\begin{proposition}\label{smooth Bing spine}
Let $M^3 = \Sigma\times [-1,1]$, where $\Sigma$ is a closed surface. If $B\subset M$ is a Bing spine parametrized by a smooth generic immersion $f\colon \Sigma \rightarrow M$, then $f$ is an embedding.
\end{proposition}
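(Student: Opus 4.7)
The plan is to derive a contradiction from the assumption that $f$ is not an embedding. Since $f$ is a smooth generic immersion of a surface into a $3$-manifold its only singularities are transverse double curves and isolated triple points, so I eliminate both. \emph{No triple points:} stratify $B$ as $B_2 \sqcup B_1 \sqcup B_0$ (smooth locus, open double-curve arcs, $T$ triple points). Preimages under $f$ are $1$-, $2$-, $3$-to-$1$ covers, so $\chi(\Sigma) = \chi(B_2) + 2\chi(B_1) + 3T$ and $\chi(B) = \chi(B_2) + \chi(B_1) + T$. The singular graph has $T$ trivalent vertices and $3T/2$ edges, giving $\chi(B_1) = -3T/2$. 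Combining with $\chi(B) = \chi(M) = \chi(\Sigma)$ forces $T = 0$.

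Now suppose $f$ has a double curve $\gamma \subset B$. Let $g_\pm \colon \Sigma_\pm \to B$ be the restrictions of the Bing MC map to the two components of $\partial M$, and $U_\pm \subset M \setminus B$ the corresponding collar components. Local flatness of $g_\pm$ forces each sufficiently small neighborhood in $\Sigma_\pm$ to be carried into a single smooth sheet of $B$, which gives a continuous lift $\tilde g_\pm \colon \Sigma_\pm \to \Sigma$ with $f \circ \tilde g_\pm = g_\pm$ that is a local homeomorphism. By compactness of $\Sigma_\pm$ and connectedness of $\Sigma$, each $\tilde g_\pm$ is a covering map of some degree $d_\pm \geq 1$; counting preimages of a generic smooth $p \in B$ (one per side of $p$ in $M \setminus B$) yields $d_+ + d_- = 2$. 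Hence $d_\pm = 1$, each $\tilde g_\pm$ is a homeomorphism, every smooth point of $B$ is separating, and the four local quadrants of $M \setminus B$ at every double-curve point are $U_\pm$-coloured in the alternating cyclic pattern $(+,-,+,-)$.

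Through $\tilde g_+$, the MC structure produces a continuous family $\psi \colon \Sigma \times [0,1] \to \overline{U_+}$ with $\psi_t(\Sigma) \subset U_+$ for $t < 1$ and $\psi_1 = f$. Fix $s_0 \in \tilde\gamma_f := f^{-1}(\gamma)$ and local coordinates at $p = f(s_0)$ in which $B = \{xy = 0\}$ and $U_+ \cap \mathcal{N}(p) = \{x,y>0\} \sqcup \{x,y<0\}$. A small disk $D \subset \Sigma$ centred at $s_0$ is carried by $f$ into the sheet $S_1 = \{y = 0\}$, and the arc $\tilde\gamma_f \cap D$ cuts $D$ into two halves $D_\pm$ mapping into $\{x > 0\}$ and $\{x < 0\}$ respectively. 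For $t$ sufficiently close to $1$, uniform convergence $\psi_t \to f$ on $D$ and the alternating coloring force the $y$-coordinate of $\psi_t(s)$ to be strictly positive on $D_+$ and strictly negative on $D_-$, outside an arbitrarily small neighborhood of $\tilde\gamma_f$. By the intermediate value theorem along any path in $D$ from $D_+$ to $D_-$ there exists $s^\ast \in D$ with $\psi_t(s^\ast)_y = 0$; but then $\psi_t(s^\ast) \in S_1 \subset B$, contradicting $\psi_t(D) \subset U_+$. Hence $f$ has no double curve, and $f$ is an embedding.

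The main obstacle I anticipate is Step 2 — extracting from the Bing-spine hypothesis the conclusion that $\tilde g_\pm$ are degree-one coverings. This is what upgrades the local flatness of the MC map to the strong constraint (alternating coloring) on which the intermediate value argument rests.
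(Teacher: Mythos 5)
Your Step~1 uses the same Euler-characteristic argument as the paper (the paper just writes the correction term $(2T-\tfrac12\cdot2\cdot3T)$ inline rather than stratum-by-stratum), but there is a factual slip in the count: a triple point of $B$ is a $6$-valent vertex of the singular graph in $B$ (three double curves cross there, each contributing two local arc-ends), so that graph has $T$ vertices and $3T$ edges, not $3T/2$; the correct stratum count is $\chi(B_1)=-3T$. Comparing with the mapping-cylinder identity $\chi(\Sigma)-\chi(B)=\chi(B_1)+2T=0$ still yields $T=0$, so the conclusion survives, but the computation as written is wrong and should be corrected.

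Step~2 takes a genuinely different route from the paper (the paper rules out double curves by a $\pi_1$ argument: identifying circles adds a free HNN/amalgamation factor to $\pi_1(B)$, contradicting $\pi_1(B)\cong\pi_1(\Sigma)$; surface groups are freely indecomposable and they never satisfy such nontrivial splittings). Unfortunately your Step~2 has a genuine gap at exactly the place you flagged as the ``main obstacle,'' and it is worse than you anticipated: the lifts $\tilde g_\pm$ do not exist. The claim that local flatness forces $g_\pm$ to carry a small neighborhood into a single smooth sheet of $B$ is false, and in fact the mapping-cylinder structure forces the \emph{opposite}. Near a double-curve point $q$, $B\cap \mathcal{N}(q)=\Pi_1\cup\Pi_2$ and $\mathcal{N}(q)\setminus B$ has four quadrants. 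The MC collar $\mathcal{N}_p\times(1-\varepsilon,1)$ at a preimage $p\in g^{-1}(q)$ is a connected subset of $M\setminus B$; once inside $\mathcal{N}(q)$ it therefore lies in a \emph{single} quadrant $Q$. Its closure meets $B$ in exactly $g(\mathcal{N}_p)$, which is a $2$-disk through $q$; but $\overline Q\cap B$ is a bent $L$-surface made of two half-sheets, and a full $2$-disk through $q$ that sits inside $\overline Q\cap B$ must use parts of both half-sheets. So $g(\mathcal{N}_p)$ is forced to be a bent $L$-surface $\Pi_i^\pm\cup\Pi_j^\mp$, not a subset of one smooth sheet. (PL local flatness does not rule this out: an $L$-bend is a PL locally flat codimension-$1$ disk, and the required bicollar exists.) Since $g_\pm$ bends across the double curve, there is no continuous lift through $f$, hence no covering maps $\tilde g_\pm$, no degree count $d_++d_-=2$, and no alternating $(+,-,+,-)$ colouring; the intermediate-value argument that follows has nothing to stand on. Concretely, at a generic double-curve point the four preimages of $q$ under $g$ are in bijection with the four quadrants, each carrying an $L$-bent image and one quadrant of collar, and this picture is entirely compatible with two of the quadrants being labelled $+$ and two $-$ in either cyclic pattern. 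A repair would need a genuinely different way to exploit $M\cong\Sigma\times I$; the paper's $\pi_1$ argument does this cleanly by never looking at the collapse map at all, only at the homotopy type of the image $B\simeq\Sigma$.
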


\begin{proof}
Since $f$ is generic, it has only standard double curve and triple point singularities. Let $T$ be the number of triple points. 

If $T=0$, then $f$ must also be without double curves since each double curve introduces a new free generator into the fundamental group of the image of $f$. Let $B$ be the image of $f$. Then $\pi_1(\Sigma) \cong \pi_1(B) \cong \pi_1 (\Sigma) \ast F_{\# \text{double curves}}$, so the number of double curves must be $0$ and $f$ is an embedding. 

So suppose that $T>0$. We may easily compute the Euler characteristic $\chi (B)$ as 
\[
	\chi (\Sigma) = \chi (B) = \chi (\Sigma) - \left(2T -\frac{1}{2}\cdot 2 \cdot 3 T\right) = \chi (\Sigma ) + T
\]
showing $T=0$. 

The correction $(2T -\frac{1}{2}\cdot 2 \cdot 3 T)$ above is accounted for as follows. Each triple point means the loss of two $0$-cells in the image $B$ of $f$. The pre-image graph of multiple points is $4$-valent so contains twice as many edges as vertices. It contains $3T$ vertices, so it contains $2\cdot 3T$ edges, and half of this total is missing from the cell structure of $B$.
\end{proof}

Similarly, one can show that if $M$ is a twisted $I$-bundle, then any Bing spine which is the image of a generic smooth immersion is actually an embedded surface.

We strongly conjecture that Proposition 2 holds without the genericity hypothesis, but were unable to find a proof. To appreciate the difficulty, note that any closed planar set can be the intersection of two sheets of a smooth immersion. It appears that a proof in complete generality would be a very nice exercise in planar topology which is certainly outside the scope of this paper, but a good problem for the interested reader.

On a related question, given a manifold $M$ with boundary $\partial M$ we ask if $M$ admits a Bing spine which can be parametrized by a smooth (not necessarily generic) immersion of $\partial M$ (or just some smooth manifold). Interval bundles over an $(n-1)$-manifold clearly have such BS. In the other direction, in \cite{freedman-nguyen} we proved that the $n$-ball $B^n$ do not have Bing spines that can be $C^1$ parametrized by a smooth manifold. The argument in \cite{freedman-nguyen} generalizes to give the following theorem that addresses the case when $\partial M$ is connected.

\begin{thm}
	Let $M^n$ be a smooth manifold of dimension $n \geq 2$ with connected boundary $\partial M$. If the intersection pairing between $H_{n-1}(M,\mathbb{Z}_2)$ and $H_1(M, \partial M, \mathbb{Z}_2)$ is trivial, then $M$ cannot have a smooth Bing spine $Y$, i.e. subset $Y \subset M$ which can be parameterized by a $C^1$-immersion $f \colon N \hookrightarrow Y \subset M$ for some closed, smooth $(n-1)$-manifold $N$, cannot have$M \setminus Y$ is homeomorphic to $\de M \times [0,\infty)$.
\end{thm}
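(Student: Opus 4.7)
The plan is to argue by contradiction, extending the argument of \cite{freedman-nguyen} for $B^n$. Since $\Z_2$ is a field, Poincar\'e--Lefschetz duality makes the intersection pairing
\[
	H_{n-1}(M;\Z_2)\otimes H_1(M,\de M;\Z_2)\longrightarrow\Z_2
\]
perfect, so the triviality hypothesis is equivalent to $H_{n-1}(M;\Z_2)=0$. It therefore suffices to show that the existence of a smooth Bing spine $Y\subset M$, parametrized by a $C^1$-immersion $f\colon N\to Y$ with $N$ a closed smooth $(n-1)$-manifold, forces $[Y]:=f_*[N]\in H_{n-1}(M;\Z_2)$ to be nonzero.

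To produce a class in $H_1(M,\de M;\Z_2)$ pairing nontrivially with $[Y]$, I would construct an arc $\alpha$ from $\de M$ to $\de M$ meeting $Y$ transversally at a single regular point of $f$. Concretely, pick $y\in Y$ at which $f$ is locally injective, so that $f^{-1}(y)$ is one point and $Y$ carries a tangent hyperplane $T_yY$; choose a short transversal arc $\tau$ through $y$ whose endpoints lie in the open collar $M\setminus Y\cong\de M\times[0,\infty)$. Running each endpoint smoothly down the $[0,\infty)$-coordinate of the collar produces arcs $\beta_\pm\subset M\setminus Y$ terminating on $\de M$, and the concatenation $\alpha:=\beta_-\ast\tau\ast\beta_+$ meets $Y$ transversally only at $y$. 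Since $\de M$ is connected, $\de\alpha$ vanishes in $H_0(\de M;\Z_2)$, so $[\alpha]\in H_1(M,\de M;\Z_2)$ is well-defined, and the mod-$2$ intersection number is $[\alpha]\cdot[Y]=\#f^{-1}(y)\equiv 1\pmod{2}$, contradicting the vanishing of the pairing.

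The delicate step, and the main obstacle in adapting the $B^n$ argument to general $M$, is the initial selection of a point $y\in Y$ with $|f^{-1}(y)|$ odd: an arbitrary $C^1$-immersion of a closed $(n-1)$-manifold can be uniformly multi-sheeted (e.g.\ $S^1\sqcup S^1\to S^1$ as the trivial double cover), so a single-preimage point need not exist a priori. The proof must exploit the hypothesis that $\de M$ is connected together with the mapping cylinder interpretation of the collar: the end map $\de M\to Y$ induced by $M\setminus Y\cong\de M\times[0,\infty)$ is continuous, and connectedness of $\de M$ rules out factorizations through the kinds of nontrivial covers that uniform even multiplicity would force. Formalizing this ``odd-multiplicity exists'' step, presumably via a degree argument combining Poincar\'e duality on $\de M$ with the sheet structure inherited from the collar, is where the connectedness hypothesis enters the proof essentially; once it is in hand, the construction of $\alpha$ and the transverse intersection calculation complete the contradiction.
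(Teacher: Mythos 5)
Your outline has the right global shape: using perfectness of the mod-$2$ Lefschetz pairing to convert triviality of the pairing into $H_{n-1}(M;\mathbb{Z}_2)=0$, and then seeking a contradiction by exhibiting a relative $1$-cycle $\alpha$ with $[\alpha]\cdot f_*[N]=1$. That is the natural extension of the $B^n$ case, and the paper's ``proof'' in fact just defers to Theorem~A$'$ of \cite{freedman-nguyen}, with $\mathbb{D}^n$ replaced by $M^n$, so the comparison is really with that argument.

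The difficulty is that the step you yourself flag --- producing a point $y\in Y$ over which the total sheet count is odd, ideally a single-preimage point --- is not a detail to be patched; it is the entire technical content. Your phrase ``at which $f$ is locally injective'' imposes no constraint, since every $C^1$-immersion is locally injective at every point of $N$; global single-preimage is what you need, and, as you correctly observe, uniform multiplicity $2$ is achievable even for connected $N$ (e.g.\ $z\mapsto z^2$ on $S^1$, and its higher-dimensional analogues). Your proposed remedy --- that connectedness of $\de M$ together with the collar structure ``rules out factorizations through nontrivial covers'' --- names the conclusion you want rather than a mechanism that forces it. And the arc construction stalls even earlier: with only $C^1$ regularity the multiple-point set need not be stratified and distinct sheets through a common point can be tangent rather than transversal, so even an odd-multiplicity point, were one to exist, would not automatically give a short arc meeting $Y$ with a clean transversal count. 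The machinery of \cite{freedman-nguyen} is precisely what stands in for this missing step. As it stands, your proposal is a correct reduction together with a statement of what remains to be proved, not a proof.
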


\begin{proof}
The proof is essentially the same as the proof of Theorem A' in \cite{freedman-nguyen} where $\mathbb{D}^n$ is now replaced by $M^n$; Theorem 3 is a scholem to Theorem A'.
\end{proof}

The condition that ``the intersection pairing between $H_{n-1}(M,\mathbb{Z}_2)$ and $H_1(M, \partial M, \mathbb{Z}_2)$ is trivial" is necessary for the proof to work. This condition avoids spaces like twisted $I$-bundles (such as M\"obius bands), which have smooth Bing spines, namely the zero section. However, it is likely that this condition is not strictly necessary since there are plenty of $3$-manifolds with homology equivalent to twisted $I$-bundles. We suspect that many of these do not have $C^1$ Bing spines.

\subsection{Weak $h$-principle}\label{weak h-principle}
In this subsection we develop a small amount of semi-simplicial machinery and use it to give the \emph{precise} statement of Theorem \ref{roughthm} and its proof. We begin with:

\begin{remark}\label{concordantspines}
All spines for any $(M^n, \de M^n)$ are concordant (i.e. joined by some spine for $M \times I$). The proof is to relatively collapse $M \times I$ to a spine after a collapse has been fixed on $M \times \{\de I\}$. By applying Theorem \ref{approximation} in the relative context, all Bing spines are Bing-spine concordant. So concordance is a much weaker relation than regular homotopy. Concordance is our next topic.
\end{remark}

Given $(M^n, \de M^n)$, $n \geq 3$, let $\mathcal{S},\mathcal{S}_S,\mathcal{B}$, and $\mathcal{B}_S$ be four semi-simplicial spaces defined as follows.

$\mathcal{S}(M)$: vertices are spines for $M$, edges are spines for $M \times I$ which restrict to spines for $M \times -1$ and $M \times +1$, $\de I = \{\pm 1\}$, then inductively a $k$-simplex for $\mathcal{S}(M)$ is a spine for $M \times \Delta^k$ which restricts to each face of $\Delta_k$ to a $(k-1)$-simplex for $\mathcal{S}(M)$. $\mathcal{S}(M)$ is given the weakest topology making the inclusion of these simplices continuous.

$\mathcal{S}_S(M)$ is similarly defined except that it is based on \emph{sliced} concordance of spines rather than mere concordance as above. As previously remarked, sliced concordance agrees with regular homotopy. Thus a $k$-simplex of $\mathcal{S}_S(M)$ is a spine $S^k$ for $M \times \Delta^k$, so that for all $p \in \Delta^k$, $(S^k \cap \pi_2^{-1}(p))$ is a spine for $M$, where $\pi_2: M \times \Delta^k \ra \Delta^k$ is projection to the second factor.

$\mathcal{B}(M)$ and $\mathcal{B}_S(M)$ are defined exactly parallel to $\mathcal{S}(M)$ and $\mathcal{S}_S(M)$ but where all spines are now required to be Bing spines.

An immediate generalization of Remark \ref{concordantspines} says that the cone on any subcomplex $L$ of $\mathcal{S}(M)$ maps into $\mathcal{S}(M)$.
\begin{center}
	\begin{tikzpicture}
		\node at (0,0) {$L \hookrightarrow \mathcal{S}(M)$};
		\node at (-0.8,-1) {$\operatorname{cone}(L)$};
		\node[rotate=90] at (-0.8,-0.5) {$\hookleftarrow$};
		\draw[dashed,->] (0,-0.95) -- (0.5,-0.3);
	\end{tikzpicture}
\end{center}
Thus $\mathcal{S}(M)$ it is weakly contractible. If $\dim(M) \geq 3$, the same is true for $\mathcal{B}(M)$ by the approximation theorem in its relative form: if $M \times \Delta^k$ has a spine $S$ which is a BS when restricted to $M \times \de D^n$, then $S$ may be approximated, the Hausdorff topology, rel $M \times \de \Delta^k$ by a BS of $M \times \Delta^k$.

Since $M$ is assumed compact, sliced concordance in the Bing context $B_S$ is equivalent to building these semi-simplicial spaces from the regular homotopy relation. As we have seen, there is some rigidity in that context so that best we can do is prove an $h$-principle for $\mathcal{B}(M) \subset \mathcal{S}(M)$. This is Theorem \ref{roughthm}, now stated precisely.

\setcounter{thm}{1}
\begin{thm}[precise statement]\label{full-theorem}
	Assume $(M^n, \de M^n)$, $n \geq 3$, is a compact PL manifold with $\de M \neq \varnothing$. Then the semi-simplicial realizations $\mathcal{B}(M) \overset{\operatorname{inc}}{\hookrightarrow} \mathcal{S}(M)$ constructed above admit, for every $\epsilon > 0$, a strong left inverse: $\pi_\epsilon: \mathcal{S}(M) \ra \mathcal{B}(M)$, $\pi_\epsilon \circ \operatorname{inc} = \id_{\mathcal{B}(M)}$. $\pi_\epsilon$ being within $\epsilon$ of $\id_{\mathcal{S}(M)}$ in the sense that on a $k$-simplex $\sigma$ of $\mathcal{S}(M)$ the $k$-concordance $\sigma$ and $\pi_\epsilon(\sigma)$ are within $\epsilon$ in the Hausdorff topology on subsets of $M \times \Delta^k$. Furthermore there is a simplicial homotopy $\Pi_\epsilon$, within $\epsilon$ of the first factor projection, making $\operatorname{inc}: \mathcal{B}(M) \hookrightarrow \mathcal{S}(M)$ an $\epsilon$-controlled strong deformation retraction.
	\begin{center}
		\begin{tikzpicture}
			\node at (0,0) {$\mathcal{S}(M) \times 1 \hookrightarrow \mathcal{S}(M) \times I \hookleftarrow \mathcal{S}(M) \times 0$};
			\node at (-2.4,-1) {$\mathcal{B}(M)$};
			\node at (-1.1,-0.95) {$\overset{\operatorname{inc}}{\hookrightarrow}$};
			\node at (0.2,-1) {$\mathcal{S}(M)$};
			\draw[->] (-2.4,-0.3) -- (-2.4,-0.7);
			\draw[->] (0.2,-0.3) -- (0.2,-0.7);
			\node at (-2.2,-0.5) {\scriptsize{$\pi_\epsilon$}};
			\node at (0.45,-0.5) {\scriptsize{$\Pi_\epsilon$}};
			\draw[<-] (0.8,-1) -- (2.2,-0.3);
			\node at (1.65,-0.75) {\scriptsize{$\id$}};
		\end{tikzpicture}
	\end{center}
	where $\operatorname{dist}_{\mathrm{Haus}}(\Pi_\epsilon(s,t),s) \leq \epsilon$, $0 \leq t \leq 1$.
\end{thm}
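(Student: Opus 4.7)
The plan is to build $\pi_\epsilon$ and $\Pi_\epsilon$ by induction on the skeleta of $\mathcal{S}(M)$, using at each stage the Relative Version of Theorem \ref{approximation} to arrange compatibility with face maps while maintaining $\epsilon$-control in the Hausdorff topology. Since a $k$-simplex of $\mathcal{S}(M)$ is by definition a spine $\sigma$ for $M \times \Delta^k$ whose restrictions to faces are face simplices, a semi-simplicial map $\pi_\epsilon$ is determined by specifying, for each such $\sigma$, a Bing spine $\pi_\epsilon(\sigma)$ for $M \times \Delta^k$ that restricts on each face $F$ to the already-chosen $\pi_\epsilon(\sigma|_F)$. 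This is precisely the ``extend rel boundary'' situation to which the relative theorem applies.

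First I would define $\pi_\epsilon$ on $0$-simplices: if $S$ is already a Bing spine, set $\pi_\epsilon(S) = S$; otherwise choose a Bing spine $\pi_\epsilon(S)$ within Hausdorff distance $\epsilon$ of $S$ via the absolute Theorem \ref{approximation}. For the inductive step, assume $\pi_\epsilon$ has been defined on the $(k-1)$-skeleton compatibly with face maps and within $\epsilon$ in Hausdorff distance. Given a $k$-simplex $\sigma$, the collection $\bigcup_F \pi_\epsilon(\sigma|_F)$ already forms a Bing spine on the codimension-$0$ boundary submanifold $A := M \times \partial \Delta^k \subset \partial(M \times \Delta^k)$ (after the harmless corner-rounding that turns $M \times \Delta^k$ into a PL manifold with boundary). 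The Relative Version of Theorem \ref{approximation}, applied in ambient dimension $n+k$, then produces an extension to a Bing spine $\pi_\epsilon(\sigma)$ of $M \times \Delta^k$ that restricts to the prescribed data on $A$ and is within Hausdorff distance $\epsilon$ of $\sigma$. The dimension hypothesis $n+k>3$ holds automatically for $k \geq 1$ since $n \geq 3$; for $k=0$ the absolute theorem suffices. If $\sigma$ itself is a Bing simplex we take $\pi_\epsilon(\sigma) = \sigma$, which is consistent with all face choices (the faces are then Bing, and by the induction hypothesis were fixed) and forces $\pi_\epsilon \circ \operatorname{inc} = \mathrm{id}_{\mathcal{B}(M)}$.

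To construct the homotopy $\Pi_\epsilon$, I would repeat the skeletal induction in one higher dimension but now valued in $\mathcal{S}(M)$ rather than $\mathcal{B}(M)$: for each $k$-simplex $\sigma$ the target is a spine for $M \times \Delta^k \times I$ that restricts to $\sigma$ on $M \times \Delta^k \times \{0\}$, to $\pi_\epsilon(\sigma)$ on $M \times \Delta^k \times \{1\}$, and on each prism face to the already-constructed homotopy of the corresponding face simplex. Such an extension exists by the concordance statement of Remark \ref{concordantspines} applied relatively: any two spines that agree on a subcomplex are concordant rel that subcomplex. The bound $\operatorname{dist}_{\mathrm{Haus}}(\Pi_\epsilon(\sigma,t),\sigma) \leq \epsilon$ is obtained by arranging the interpolating spine to lie in the $\epsilon$-neighborhood of $\sigma \times I$ in $M \times \Delta^k \times I$; this neighborhood is available because $\pi_\epsilon(\sigma)$ was chosen in the $\epsilon$-neighborhood of $\sigma$, and the partial-collapse trace produced by the proof of Theorem \ref{approximation} (inserting Bing ruffs along dangerous collapses) naturally gives a spine of $M \times \Delta^k \times I$ lying in such a neighborhood. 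On $\mathcal{B}(M)$-simplices we take the trivial concordance $\sigma \times I$, yielding the strong deformation retraction property.

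The main obstacle I anticipate is the bookkeeping needed to maintain simultaneous $\epsilon$-control and \emph{strict} face-compatibility throughout the entire skeletal induction: the Bing spines chosen on various faces of $\sigma$ must match on subfaces exactly \emph{and} jointly lie within $\epsilon$ of $\sigma|_{\partial}$, so that the relative approximation step on $\sigma$ has admissible boundary data. This is exactly what the relative form of Theorem \ref{approximation} is designed to handle, so the real technical work is in verifying that the relative theorem produces approximations with precisely the prescribed boundary data and with uniform rather than compounding $\epsilon$-bounds; the corner-rounding of $M \times \Delta^k$ is benign because the collapse arguments in the proof of Theorem \ref{approximation} are local and pass through corners without difficulty.
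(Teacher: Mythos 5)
Your proposal is correct and follows essentially the same route as the paper: skeletal induction on simplices of $\mathcal{S}(M)$, using the relative version of Theorem \ref{approximation} to produce face-compatible Bing-spine approximations for each $S^i$ in $M \times \Delta^i$, taking $\pi_\epsilon$ to be the identity on $\mathcal{B}(M)$, and constructing $\Pi_\epsilon$ by applying the same relative construction to $\sigma \times I$. Your version is simply more explicit about the induction, the dimension check, and the corner-rounding.
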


\begin{proof}
	The construction of $\pi_\epsilon$ consists of specifying for every $S^i$ a spine of $M \times \Delta^i$ and every $\epsilon > 0$, an $\epsilon$-approximating $BS^i$. Given the collapse sequence $M \searrow S$, Theorem \ref{approximation}, in its relative form, provides such an approximating $BS^i$ on $M \times \Delta^i$. It is not necessary to have a canonical choice for $BS^i$, for each simplex $\Delta^i$ we may just choose some $BS^i$. The map $\pi_\epsilon$ can be specified to be the identity on $\mathcal{B}(M)$; if a spine is already a Bing spine do not modify it. The deformation $\Pi_\epsilon$ is obtained by applying the construction relatively to $\sigma \times I$, $\sigma$ being a spine on $M \times \Delta^i$. This completes the proof of Theorem \ref{full-theorem}.
\end{proof}

A partial result like Theorem \ref{roughthm} but for $B_S$ and $S_S$ would be of great interest.

\begin{bibdiv}
\begin{biblist}

\bib{hirsch}{article}{
	title={Immersions of manifolds},
	author={Hirsch, Morris},
	journal={Trans. Am. Math. Soc.},
	volume={93},
	number={2},
	pages={242--276},
	year={1959}
}

\bib{whitney}{article}{
	title = {On regular closed curves in the plane},
	author = {Whitney, Hassler},
	journal = {Compositio Math.},
	volume = {4},
	year = {1937},
	pages = {276-284}
}

\bib{freedman-nguyen}{article}{
	title = {Non-separating immersions of Spheres and Bing houses},
	author = {Michael Freedman},
	author = {Tam Nguyen-Phan},
	eprint = {1911.08858},
	note = {To appear}
}

\end{biblist}
\end{bibdiv}

\end{document}